\newtheorem{thm}{Theorem}[section]
\newtheorem*{thm*}{Theorem}
\newtheorem*{conj*}{Conjecture}
\newtheorem{cor}[thm]{Corollary}
\newtheorem{lem}[thm]{Lemma}
\newtheorem{prop}[thm]{Proposition}
\newtheorem{eg}[thm]{Example}
\theoremstyle{remark}
\theoremstyle{definition}
\newtheorem{defn}[thm]{Definition}
\newcounter{claim}[thm]
\newcommand{\sym}[1]{\mathrm{Sym}(#1)}
\newcommand{\alt}[1]{\mathrm{Alt}(#1)}
\newcommand{\grp}[1]{\langle #1 \rangle}				
\newcommand{\op}[2]{\mathrm{O}_{#1}(#2)}	
\newcommand{\norm}[2]{\text{N}_{#1}(#2)}				
\newcommand{\cent}[2]{\text{C}_{#1}(#2)}				
\newcommand{\vst}[2]{G_{#1}^{[ #2]}}		
\newcommand{\vstx}[1]{G_x^{[ #1 ]}}			
\newcommand{\syl}[2]{\mathrm{Syl}_{#1}( #2)}	
\newcommand{\zent}[1]{\mathrm{Z}(#1)}
\newcommand{\ba}[1]{\overline{#1}}
\newcommand{\aut}[1]{\mathrm{Aut}(#1)}
\newcommand{\cyc}[1]{\mathrm C_{#1}}
\newcommand{\fst}[1]{\mathbf F^*( #1) }
\title{On locally semiprimitive graphs and a theorem of Weiss}
\author{Michael Giudici}
\author{Luke Morgan}
\address{
School of Mathematics and Statistics (M019)\\
University of Western Australia\\
Crawley, 6009\\
Australia} 
\email{michael.giudici@uwa.edu.au}	\email{luke.morgan@uwa.edu.au}
\thanks{This research was supported under Australian Research Council's \emph{Discovery Projects} funding scheme (project number DP 120100446).}
\begin{document}

\begin{abstract}
In this paper we investigate graphs that admit a group acting arc-transitively such that the local action
is semiprimitive with a regular normal nilpotent subgroup. This type of semiprimitive group is a
generalisation of an affine group. We show that if the graph has valency coprime to six, then there is a
bound on the order
of the vertex stabilisers depending on the valency alone.
We also prove a detailed structure theorem for the vertex stabilisers in the
remaining case. This is a contribution to an ongoing project to investigate the validity of the Poto\v{c}nik-Spiga-Verret
Conjecture.
\end{abstract}

\maketitle

\section{introduction}

All  graphs in this paper are finite, connected and simple and every action of a group on a graph is
faithful. If a group $G$ acts on a graph $\Gamma$ and  $x$ is a  vertex of $\Gamma$, we write $\Gamma(x)$
for the neighbourhood of $x$ in $\Gamma$ and $G_x^{\Gamma(x)}$ for the permutation group induced on the set
$\Gamma(x)$ by the stabiliser $G_x$.
If $\mathcal P$ is a property of permutation groups and $L$ is some permutation group, we
say that the pair $(\Gamma,G)$ is \emph{locally} $\mathcal P$, respectively, \emph{locally} $L$ to indicate that
for all vertices $x$ of $\Gamma$,  $G_x^{\Gamma(x)}$ satisfies property $\mathcal P$, respectively,
$G_x^{\Gamma(x)}$ is permutationally isomorphic to $ L$.

Following  \cite{verret},  if there is a constant $c(L)$ such that for every locally $L$ pair $(\Gamma,G)$
we have $|G_x| \leqslant  c(L)$, we say that $L$ is \emph{graph-restrictive}. In this language, the long-standing Weiss Conjecture \cite{weisscon} asserts that primitive permutation groups are graph-restrictive and the Praeger Conjecture asserts that quasiprimitive groups are graph-restrictive. Certain
classes of permutation groups are known to be graph-restrictive: it is easy to see that regular groups are
graph-restrictive for example. However the proof that 2-transitive groups are graph-restrictive  
\cite[Theorem 1.4]{trofweiss1} is a deep result which uses the Classification of Finite Simple Groups. 
Fresh light was cast upon this problem by \cite[Theorem 4]{PSV} which shows that a graph-restrictive group is necessarily \emph{semiprimitive} (see Definition~\ref{defn:sp}). Going further, the authors of \cite{PSV} stated the Poto\v{c}nik-Spiga-Verret (PSV) Conjecture:  a permutation group is graph-restrictive if and only if it is semiprimitive.
Since the class
of semiprimitive permutation groups properly encompasses the classes of  primitive and quasiprimitive groups,
the PSV Conjecture is broader in scope than both the Weiss and Praeger Conjectures and a proof of the former
would imply the truth of the latter two conjectures.

In this paper we investigate the validity of the PSV Conjecture for semiprimitive groups with a regular
normal nilpotent subgroup. One can view this type of semiprimitive group as a generalisation of affine
groups, which form one of the eight types of primitive groups. However our knowledge of semiprimitive groups
pales in comparison to our knowledge of primitive groups, there is no  O'Nan-Scott-Aschbacher type theorem
for semiprimitive groups for example. In \cite{BM} soluble semiprimitive groups were studied and a
classification achieved when the degree is square-free or a product of at most three primes. In Section 2 we
strengthen some of the results of \cite{BM}, related to semiprimitive groups with a regular
normal nilpotent subgroup.
Our first theorem is below. We remark that it is possible to obtain this theorem as a corollary to
Theorem~\ref{mainthm2}, but we offer a separate proof since a result proved along the way (Lemma~\ref{u
normal in balx}) may be useful elsewhere and because it afforded us the opportunity to use such nice results
as \cite[6.4.3]{kurzweilstellmacher} and  \cite{stells4free}.

\begin{thm}
\label{mainthm1}
Let $\Gamma$ be a finite connected graph of valency $d$ and let $G\leqslant \aut{\Gamma}$ be arc-transitive.
Suppose that the local action is semiprimitive with a regular normal nilpotent subgroup and $(d,6)=1$. Then for each vertex $x$ of $\Gamma$
we have $|G_x| \leqslant d!(d-1)!$.
\end{thm}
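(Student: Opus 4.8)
The plan is to reduce the bound to the single assertion that the pointwise stabiliser $G_{xy}^{[1]} := G_x^{[1]} \cap G_y^{[1]}$ of $\Gamma(x)\cup\Gamma(y)$ is trivial for every arc $(x,y)$. Write $L := G_x^{\Gamma(x)}$ for the local action and $G_x^{[1]}$ for the kernel of the action of $G_x$ on $\Gamma(x)$, so that $|G_x| = |L|\cdot|G_x^{[1]}|$ and $|L| \le d!$. The group $G_x^{[1]}$ acts on $\Gamma(y)$ with kernel exactly $G_{xy}^{[1]}$, and its image fixes $x$, so it lies in the stabiliser of $x$ in the local action at $y$, a point stabiliser of $L$ of order $|L|/d$. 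Hence if $G_{xy}^{[1]} = 1$ then $|G_x^{[1]}| \le |L|/d$, and therefore $|G_x| \le |L|^2/d \le (d!)^2/d = d!(d-1)!$. So everything comes down to proving $G_{xy}^{[1]} = 1$.

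Assume for contradiction that $G_{xy}^{[1]} \ne 1$. By the Thompson--Wielandt theorem, in the form valid for locally semiprimitive graphs, $P := G_{xy}^{[1]}$ is a $p$-group for some prime $p$. First I would dispose of the degenerate case: if $G_x^{[1]} = P$, then $G_x^{[1]} \le G_y^{[1]}$, and applying an element of $G$ reversing the arc $(x,y)$ gives $G_x^{[1]} = G_y^{[1]}$; arc-transitivity and connectivity then force the common group to fix every vertex, so it is trivial by faithfulness, contradicting $P \ne 1$. Thus $M_x := (G_x^{[1]})^{\Gamma(y)}$ is a nontrivial normal subgroup of the point stabiliser of $L$ at $y$, and symmetrically for $M_y$.

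The heart of the argument is then to show that such a configuration cannot occur, and here I would bring in the structure theory of Section 2 for semiprimitive groups $L$ with a regular normal nilpotent subgroup $N$ of order $d$. Since $(d,6)=1$, $N$ is a $\{2,3\}'$-group, so the primes $2$ and $3$ are confined to the point stabiliser. I would split according to $p$. If $p \ge 5$ then $p$ may divide $d$, and I would exploit coprime action of the relevant Hall subgroups of $N$ on $P$, in the manner of \cite[6.4.3]{kurzweilstellmacher}, to control $P$. If $p \in \{2,3\}$ then $p \nmid d$, so $N$ acts coprimely on $P$ and, because the relevant sections avoid $\sym{4}$ (of order $2^3\cdot 3$), the $S_4$-free machinery of \cite{stells4free} supplies a characteristic subgroup of the $p$-local structure. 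In either case the aim is to produce, via the normality recorded in Lemma~\ref{u normal in balx}, a nontrivial subgroup $U \le G_{xy}^{[1]}$ that is normal in $G_x$ and, by the symmetry of the construction under arc-transitivity, also in $G_y$.

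The final step is then immediate: since $(\Gamma,G)$ is connected and arc-transitive, $\grp{G_x,G_y} = G$, so $U \trianglelefteq G$; as $U$ fixes $\Gamma(x)$ pointwise and its fixed-vertex set is $G$-invariant, vertex-transitivity forces $U$ to fix every vertex, whence $U = 1$ by faithfulness, contradicting $U \ne 1$. I expect the main obstacle to be precisely the local analysis of the preceding paragraph: controlling the nontrivial normal subgroups $M_x, M_y$ of a point stabiliser of a semiprimitive group with regular normal nilpotent $N$, and from them extracting a subgroup normal in both vertex stabilisers. The coprimality $(d,6)=1$ is exactly what makes the two tools apply, since it keeps $2$ and $3$ out of $N$ so that coprime-action arguments govern the $p$-part, and it forestalls $\sym{4}$-sections so that Stellmacher's $S_4$-free theory yields the characteristic subgroup needed to invoke Lemma~\ref{u normal in balx}.
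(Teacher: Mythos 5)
Your reduction is correct and is exactly the paper's opening move: $|G_x|\leqslant d!(d-1)!$ follows once $\vst{xy}{1}=1$, since $G_x^{[1]}/\vst{xy}{1}$ embeds in a point stabiliser of the local action at $y$. Your appeal to Thompson--Wielandt (in Spiga's form \cite[Corollary 3]{spigath-w}) to make $\vst{xy}{1}$ a $p$-group, and your closing mechanism (a nontrivial subgroup invariant under enough of $G$ must fix every vertex, hence be trivial --- Lemma~\ref{lem:fundlem} in the paper) are also right. But between these two endpoints your proposal contains only a statement of intent: ``the aim is to produce \dots\ a nontrivial subgroup $U$.'' That aim \emph{is} the theorem; producing such a subgroup is where essentially all of Section 3 of the paper is spent. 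One must build $L_x=\grp{(Q_xQ_y)^{G_x}}$ with $Q_x=\op{p}{\vstx{1}}$, show $L_x^{\Gamma(x)}$ is transitive, that $Q_xQ_y\in\syl{p}{L_x}$, that $L_x$ is $p$-separable, that $\op{p}{\ba{L_x}}=\ba{Q_x}$ for $\ba{L_x}=L_x/\op{p'}{L_x}$ (Lemma~\ref{u normal in balx}), and that $\ba{L_x}$ has characteristic $p$ via \cite[6.4.3]{kurzweilstellmacher}; only then do \cite[9.4.4(b)]{kurzweilstellmacher} or \cite{stells4free} yield a nontrivial characteristic subgroup $\ba{D}$ of $\ba{Q_xQ_y}$ normal in $\ba{L_x}$, which Lemma~\ref{u normal in balx} places inside $\ba{Q_x}$, which lifts (Frattini argument, $[R_0,Q_x]=1$) to $D\leqslant Q_x$ normal in $L_x$ and characteristic in $Q_xQ_y$, hence $G_{\{x,y\}}$-invariant, and which Lemma~\ref{lem:fundlem}(b) then kills. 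None of this chain is argued in your proposal, so the central step is a genuine gap, not a compressed proof. (Note also that the subgroup obtained is \emph{not} normal in both $G_x$ and $G_y$ as your endgame demands; it is only normal in $L_x$ and $G_{\{x,y\}}$-invariant, which is why the paper needs Lemma~\ref{lem:fundlem}(b) rather than ``$U\trianglelefteq\grp{G_x,G_y}=G$.'')

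The sketch of the case division also misidentifies the tools in ways that would derail an attempt to execute it. First, $p$ can \emph{never} divide $d$: $Q_y^{\Gamma(x)}$ is a nontrivial normal $p$-subgroup of the point stabiliser $G_{xy}^{\Gamma(x)}$, so Lemma~\ref{lem: fit(H) and K coprime orders} forces $p\nmid d$ with no hypothesis on $(d,6)$. Thus your branch ``$p\geqslant 5$, $p$ may divide $d$, use coprime action of Hall subgroups of $N$ on $P$'' starts from a false premise; moreover $N$ is a quotient of a subgroup of $G_x$ (the regular normal subgroup of the local action), not a subgroup acting on $P$, and \cite[6.4.3]{kurzweilstellmacher} is the characteristic-$p$ lemma for $p$-separable groups, not a coprime-action result. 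In the paper the case $p\geqslant 5$ is handled by $p$-stability \cite[9.4.5(1)]{kurzweilstellmacher}, the case $p=3$ by $p$-stability of odd-order groups (not by $\sym{4}$-freeness), and Stellmacher's theorem \cite{stells4free} enters only for $p=2$. The clean logical organisation --- proved in the paper with no hypothesis on $d$, since it is needed again for Theorem~\ref{mainthm2} --- is Lemma~\ref{lem: p eq 2,3 and q}: if $\vst{xy}{1}\neq 1$ then $p\in\{2,3\}$ and $5-p$ divides $d$; the hypothesis $(d,6)=1$ is then used only in the final line of the proof of Theorem~\ref{mainthm1} to reach the contradiction.
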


In fact we arrive at the conclusion of the theorem above by showing that the group $G_{xy}^{[1]}$ is trivial, for some vertex $y$ adjacent to $x$ in $\Gamma$. Here, the group $G_{xy}^{[1]}$ is the kernel of the action of the edge stabiliser $G_{\{x,y\}}$ on the set $\Gamma(x) \cup \Gamma(y)$, it plays a prominent role in our investigations.

Our second theorem is a technical statement about the structure of vertex stabilisers. Weiss in \cite{weiss}
gave a detailed description of the structure of a vertex stabiliser in a locally affine graph. The most
difficult part of Weiss' proof is the application of so-called failure of factorisation arguments. With our
weaker hypothesis it is  difficult even to show that failure of factorisation results may be applied, since
the local action may have more complicated structure from the outset. This is achieved in Lemma~\ref{good
for glauberman} which allows us to employ a result of Glauberman that delivers the following theorem which
in the locally primitive case yields Weiss' result.
  The notation will be explained in Section 3.

\begin{thm}
\label{mainthm2}
Let $\Gamma$ be a finite connected graph and $G \leqslant \aut{\Gamma}$ be arc-transitive. Suppose that the
local
action is semiprimitive with a regular normal nilpotent subgroup and that $\vst{xy}{1}\neq 1$ for some edge $\{x,y\}$ of $\Gamma$. Let $p$ be a
prime dividing $|\vst{xy}{1}|$. Then there is a normal subgroup $L_x \leqslant G_x$ such that for $ V =
\langle \Omega \zent{S} \mid S \in \syl{p}{L_x} \rangle$   and $H := \mathbf J(L_x)
\cent{L_x}{V}/\cent{L_x}{V}$ the following hold:
\begin{itemize}
\item [(a)]  $\vst{xy}{1}$ is a $p$-group and $p\in \{2,3\}$.
\item [(b)] $H = E_1 \times \cdots \times E_r$ and 
$$V = \cent{V}{H} \times [V,E_1] \times \cdots \times [V,E_r].$$
In particular, $E_i$ acts faithfully on $[V,E_i]$ and trivially on $[V,E_j]$ for $j\neq i$.
\item[(c)] $|[V,E_i]| = p^2$ and $E_i \cong \mathrm{SL}_2(p)$ for $i=1,\dots,r$.
\item[(d)] $A = \times_{i=1}^r (A \cap E_i)$ and $|A||\cent{V}{A}| = |V|$ for all $A \in \mathcal A_{V}(H)$.
\end{itemize}
\end{thm}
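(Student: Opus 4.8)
The plan is to reduce parts (b)--(d) to a single application of Glauberman's failure of factorisation theorem, with the real work going into constructing the right subgroup $L_x$ and verifying that its action on $V$ genuinely fails to factorise. I would begin from the Thompson--Wielandt type fact that $\vst{xy}{1}$ is nilpotent; combined with the structure of the semiprimitive local action and its regular normal nilpotent subgroup, this forces $\vst{xy}{1}$ to be a $p$-group for a single prime $p$, which is the first assertion of (a). Fixing this $p$, I would take $L_x$ to be a normal subgroup of $G_x$ carrying the action of $\vst{xy}{1}$, arranged so that $\op{p'}{L_x} = 1$ and $\cent{L_x}{\op{p}{L_x}} \leqslant \op{p}{L_x}$, i.e.\ $L_x$ is $p$-constrained. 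Under $p$-constraint one checks that for each $S \in \syl{p}{L_x}$ one has $\zent{S} \leqslant \zent{\op{p}{L_x}}$, so that $V = \grp{\Omega \zent{S} \mid S \in \syl{p}{L_x}} \leqslant \Omega \zent{\op{p}{L_x}}$ is an elementary abelian normal $p$-subgroup of $L_x$. The quotient $H = \mathbf J(L_x)\cent{L_x}{V}/\cent{L_x}{V}$ then acts faithfully on $V$, and since any normal $p$-subgroup of $H$ would centralise $V \geqslant \Omega \zent{\op{p}{L_x}}$, faithfulness gives $\op{p}{H} = 1$.

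The crucial step is Lemma~\ref{good for glauberman}: to show that $V$ is a failure of factorisation module for $H$, equivalently that $\mathbf J(L_x) \not\leqslant \cent{L_x}{V}$. Here I would exploit the hypothesis $\vst{xy}{1} \neq 1$ directly. Because every element of $\vst{xy}{1}$ fixes $\Gamma(x) \cup \Gamma(y)$ pointwise, such elements act on $V$ in a highly restricted, quadratic manner, and from a Sylow $p$-subgroup of $\vst{xy}{1}$ together with a suitable centre of a Sylow subgroup of $L_x$ one can build an elementary abelian $A$ with $|A|\,|\cent{V}{A}| \geqslant |V|$. Forcing such an offender $A$ to lie inside $\mathbf J(L_x)$, so that the Thompson subgroup does not centralise $V$, is the main obstacle: the weaker semiprimitive hypothesis does not a priori pin down the module as the primitive case does, and it is precisely the careful choice of $L_x$ together with the regular normal nilpotent subgroup of the local action that must be used to guarantee genuine failure of factorisation.

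With failure of factorisation established, I would invoke Glauberman's theorem. For a faithful $GF(p)$-module admitting failure of factorisation with $\op{p}{H} = 1$, it returns a decomposition $H = E_1 \times \cdots \times E_r$ in which each $E_i \cong \mathrm{SL}_2(p)$ acts naturally on a two-dimensional summand, these summands together with $\cent{V}{H}$ giving the decomposition of $V$, so that $E_i$ is faithful on $[V,E_i]$ and trivial on $[V,E_j]$ for $j \neq i$; moreover every best offender $A \in \mathcal A_{V}(H)$ splits as $\times_{i=1}^r (A \cap E_i)$ and meets the factorisation equality $|A|\,|\cent{V}{A}| = |V|$. This delivers (b), (c) and (d) at once, with $|[V,E_i]| = p^2$.

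Finally I would close the loop on (a) by deducing $p \in \{2,3\}$. With the $\mathrm{SL}_2(p)$ factors and their natural $p^2$-modules now identified, I would feed this structure back into the local action: the module $V$ and the offenders arising from $\vst{xy}{1}$ must be compatible with the regular normal nilpotent subgroup of the semiprimitive group $G_x^{\Gamma(x)}$, and the strengthened results on such semiprimitive groups from Section~2 then force the prime to be $2$ or $3$. I expect the verification of failure of factorisation to be the hard part; by contrast the passage from Glauberman's theorem to (b)--(d) should be essentially a direct quotation, and the determination of $p$ a short local argument.
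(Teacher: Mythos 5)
Your high-level skeleton does match the paper's: Spiga's Thompson--Wielandt variant gives that $\vst{xy}{1}$ is a $p$-group, the relevant normal subgroup is built from $Q_xQ_y$ where $Q_x=\op{p}{\vstx{1}}$, the crux is failure of Thompson factorization, and Glauberman's theorem (\cite[9.3.8]{kurzweilstellmacher}) then delivers the structural statements. (One simplification you missed: the conclusion $p\in\{2,3\}$ is part of what that theorem yields, so your planned separate ``feed the $\mathrm{SL}_2(p)$ structure back into the local action'' argument for (a) is unnecessary.) The genuine gap is that the step you correctly identify as the heart of the matter --- that $\mathbf J(L_x)\not\leqslant\cent{L_x}{V}$ --- is exactly the step you do not prove, and the mechanism you sketch for it (quadratic action of $\vst{xy}{1}$ on $V$, hand-building an offender $A$ with $|A||\cent{V}{A}|\geqslant|V|$, then forcing $A$ into $\mathbf J(L_x)$) is not how it can be made to work, as you yourself concede. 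The paper never constructs an offender. It argues by contradiction, and the engine is a Sylow statement: $Q_x$ is a \emph{full} Sylow $p$-subgroup of $\cent{L_x}{Z_x}$. That statement rests on the Section 2 semiprimitivity results: since $Q_y^{\Gamma(x)}$ is a nontrivial normal $p$-subgroup of the local point stabiliser, the regular normal nilpotent subgroup $R^{\Gamma(x)}$ of $G_x^{\Gamma(x)}$ has order coprime to $p$; moreover $Z_x\leqslant\Omega\zent{Q_x}$, so $\cent{L_x}{Z_x}$ is either inside $\vstx{1}$ or semiregular on $\Gamma(x)$, hence in either case lies in the preimage $R$, whose Sylow $p$-subgroup is $Q_x$. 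Granting this, if $\ba{L_x}=L_x/\op{p'}{L_x}$ were Thompson factorizable, then \cite[9.2.12]{kurzweilstellmacher} would give $\mathbf J(\ba{Q_xQ_y})\leqslant\cent{\ba{L_x}}{\ba{Z_x}}$, which pulls back to $\mathbf J(Q_xQ_y)\leqslant\cent{L_x}{Z_x}$, hence $\mathbf J(Q_xQ_y)\leqslant Q_x$, hence $\mathbf J(Q_xQ_y)=\mathbf J(Q_x)$; this subgroup is nontrivial, characteristic in $Q_xQ_y$ and normal in $G_x$, so it is normalised by $\grp{G_x,G_{\{x,y\}}}$, and connectivity plus faithfulness force it to be trivial, a contradiction. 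Without this chain (or a genuine replacement), your write-up restates the theorem's key hypothesis rather than proving it.

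A second, smaller gap concerns your construction of $L_x$: you posit a normal subgroup with $\op{p'}{L_x}=1$ and $\cent{L_x}{\op{p}{L_x}}\leqslant\op{p}{L_x}$ and apply Glauberman to $L_x$ itself. No such choice is exhibited, and for the natural candidate $L_x=\grp{(Q_xQ_y)^{G_x}}$ the subgroup $R_0=\op{p'}{L_x}$ can be nontrivial (the local action has a regular normal nilpotent subgroup of order coprime to $p$, and nothing prevents part of it from lying normally in $L_x$). The paper handles this by applying Glauberman to $\ba{L_x}=L_x/R_0$ and transferring the conclusions back to $\mathbf J(L_x)\cent{L_x}{Z_x}/\cent{L_x}{Z_x}$, using that $|R_0|$ is coprime to $p$ (so $\mathbf J(\ba{L_x})=\ba{\mathbf J(L_x)}$ and $\cent{\ba{L_x}}{\ba{Z_x}}=\ba{\cent{L_x}{Z_x}}$) and that $R_0\leqslant\cent{L_x}{Z_x}$ because $[R_0,Q_x]=1$. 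If you keep your version you must either produce the posited $L_x$ or add this quotient-and-transfer step; as written, the theorem's conclusion is about $V$ and $H$ defined from $L_x$ itself, so the transfer is not optional.
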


We also obtain the following corollary to Theorem~\ref{mainthm2} which highlights where possible
counterexamples
to the PSV Conjecture may be lurking.

\begin{cor}
\label{maincor}
Assume the hypothesis of Theorem~\ref{mainthm2} and let $K = G_x^{\Gamma(x)}$ with $R$ the regular normal
nilpotent subgroup of $K$. Then $K$ contains normal subgroups $J$ and $F$ such that $F < R < J$  and one of
the following holds:
\begin{itemize}
\item[(1)] $J/F \cong \sym{3} \times \dots \times \sym{3}$ when $p=2$,
\item[(2)] $J/F \cong \alt{4} \times \dots \times \alt{4}$ when $p=3$.
\end{itemize}
\end{cor}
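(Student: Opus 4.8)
The plan is to transport the module-theoretic description of Theorem~\ref{mainthm2} into the faithful local group $K=G_x^{\Gamma(x)}=G_x/\vstx{1}$ and to recover the required section of $K$ from the abstract structure $E_1\times\cdots\times E_r\cong\mathrm{SL}_2(p)^r$ furnished there. Since $\mathbf J(L_x)$ is characteristic in $L_x$ and $L_x\trianglelefteq G_x$, the group $\mathbf J(L_x)$ is normal in $G_x$, so its image $M:=\mathbf J(L_x)\vstx{1}/\vstx{1}$ is normal in $K$. Both $M$ and the section $H=\mathbf J(L_x)\cent{L_x}{V}/\cent{L_x}{V}\cong\mathrm{SL}_2(p)^r$ are quotients of $\mathbf J(L_x)$; comparing the kernels $\mathbf J(L_x)\cap\vstx{1}$ and $\mathbf J(L_x)\cap\cent{L_x}{V}$ then shows that $M$ is itself a quotient of $\mathrm{SL}_2(p)^r$ and inherits a direct decomposition $M=M_1\times\cdots\times M_r$ with each $M_i$ a quotient of $\mathrm{SL}_2(p)$.

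Next I would fix the external permutation data. The regular normal nilpotent subgroup $R$ of $K$ is a $p'$-group: by Theorem~\ref{mainthm2}(a) the prime $p$ is carried by $\vst{xy}{1}$ and by the $p$-group $V$, while $R$ accounts for the coprime part of the degree $d=|\Gamma(x)|$. As $R$ and $M$ are both normal in $K$, the group $M$ acts on $R$, and the goal is to match the nontrivial $M$-chief factors of $R$ with the components of $H$. Mirroring the splitting $V=\cent{V}{H}\times[V,E_1]\times\cdots\times[V,E_r]$ of Theorem~\ref{mainthm2}(b), in which $E_i$ is faithful on the two-dimensional natural module $[V,E_i]$ and trivial on the others, I expect to prove that $R$ has exactly $r$ nontrivial $M$-chief factors, that they are permuted trivially with $M_i$ acting on the $i$-th and centralising the rest, and that $R\cap M$ is the normal Hall $p'$-subgroup of $M$ with $\zent{M}\leqslant R\cap M$. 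Concretely this identifies $(R\cap M)/\zent{M}$ with $\cyc{3}\times\cdots\times\cyc{3}$ when $p=2$ and with $(\cyc{2}\times\cyc{2})^{r}$ when $p=3$, these being the regular normal subgroups of $\sym{3}\cong\mathrm{SL}_2(2)$ and $\alt{4}\cong\mathrm{PSL}_2(3)$ in their natural degree-$(p+1)$ actions.

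Granting all this, I would set $F=\cent{R}{M}$ and $J=RM$, both normal in $K$. The factorisation $R=(R\cap M)\,F$ gives $J=RM=FM$, hence $J/F\cong M/(M\cap F)$; and since $M\cap F=\zent{M}\cap R=\zent{M}$ (using $\zent{M}\leqslant R$), this reads $J/F\cong M/\zent{M}$. The same factorisation yields $R/F\cong(R\cap M)/\zent{M}$, so $R/F$ is $\cyc{3}\times\cdots\times\cyc{3}$ when $p=2$ and $(\cyc{2}\times\cyc{2})^{r}$ when $p=3$; in particular $F<R<J$, since each chief factor is nontrivial and $M\not\leqslant R$ as $p$ divides $|M/\zent{M}|$ while $R$ is a $p'$-group. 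Finally $M/\zent{M}\cong\sym{3}\times\cdots\times\sym{3}$ when $p=2$, because $\mathrm{SL}_2(2)\cong\sym{3}$ has trivial centre, and $M/\zent{M}\cong\alt{4}\times\cdots\times\alt{4}$ when $p=3$, because $\mathrm{PSL}_2(3)\cong\alt{4}$; this is precisely the dichotomy in the statement.

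The hard part will be the matching in the middle step: transferring the \emph{internal} module data on the $p$-group $V$ (which governs the factors $E_i$ via $H$) to the \emph{external} action of $M$ on the $p'$-group $R$ (which governs $\Gamma(x)$), and in particular establishing both the factorisation $R=(R\cap M)\cent{R}{M}$ and the exact count and isomorphism types of the nontrivial $M$-chief factors of $R$. I anticipate using the coprimality $(|R|,p)=1$ to play the $p$-group $V$ against the $p'$-group $R$ through coprime-action and Hall-subgroup arguments, together with the offender condition of Theorem~\ref{mainthm2}(d) on $\mathcal{A}_{V}(H)$, which should pin each nontrivial chief factor down to the natural two-dimensional $\mathrm{SL}_2(p)$-module and thereby fix the degree-$(p+1)$ local action of each $E_i$ as $\sym{3}$ or $\alt{4}$.
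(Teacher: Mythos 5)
Your first concrete step already contains a genuine gap. For $M=\mathbf J(L_x)\vstx{1}/\vstx{1}$ to be a quotient of $H=\mathbf J(L_x)\cent{L_x}{V}/\cent{L_x}{V}\cong\mathrm{SL}_2(p)^r$, the kernel comparison must go in a specific direction: you need $\mathbf J(L_x)\cap\cent{L_x}{V}\leqslant \mathbf J(L_x)\cap\vstx{1}$, i.e.\ $\mathbf J(L_x)\cap\cent{L_x}{V}\leqslant\vstx{1}$. Nothing provides this. Writing $M_x=\cent{L_x}{V}=\cent{L_x}{Z_x}$ as in the paper, $M_x$ contains $Q_xR_0$ and in general acts nontrivially on $\Gamma(x)$: its image in $K$ is a semiregular, possibly nontrivial, normal subgroup lying inside $R$ (this is exactly why Lemma~\ref{qx a sylow of clxzx} only concludes that $Q_x$ is the Sylow $p$-subgroup of $M_x$, not that $M_x\leqslant\vstx{1}$). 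Consequently the image of $\mathbf J(L_x)$ in $K$ is, a priori, an extension of a nontrivial semiregular $p'$-group by a quotient of $\mathrm{SL}_2(p)^r$, and your asserted decomposition $M=M_1\times\cdots\times M_r$ with each $M_i$ a quotient of $\mathrm{SL}_2(p)$ --- on which everything downstream rests ($\zent{M}\leqslant R\cap M$, $M\cap\cent{R}{M}=\zent{M}$, $M/\zent{M}\cong\mathrm{PSL}_2(p)^r$) --- is unjustified. Even granting it, a quotient of $\mathrm{SL}_2(3)$ can be trivial or $\cyc{3}$, so you would additionally owe an argument that no factor collapses, i.e.\ that exactly $r$ copies of $\mathrm{PSL}_2(p)$ survive.

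The step you defer as ``the hard part'' is where the content would lie, and the paper shows it can be bypassed entirely, which is the real divergence between your plan and the actual proof. The paper works upstairs in $G_x$: with $J_x=\mathbf J(L_x)M_x$ and $J_x^{[1]}=J_x\cap\vstx{1}$, Lemma~\ref{lem:lx/qx centre} gives $[J_x,J_x^{[1]}]\leqslant Q_x\leqslant M_x$, hence $[J_x,M_xJ_x^{[1]}]\leqslant M_x$, so $M_xJ_x^{[1]}/M_x$ is \emph{central} in $J_x/M_x\cong\mathrm{SL}_2(p)^r$. Defining $F$ by $F/M_x=\zent{J_x/M_x}$ then yields $J_x^{[1]}\leqslant F\trianglelefteq G_x$ and $J_x/F\cong\mathrm{PSL}_2(p)^r$, which is $\sym{3}\times\cdots\times\sym{3}$ or $\alt{4}\times\cdots\times\alt{4}$; because $F$ absorbs $J_x^{[1]}$, this section descends verbatim to $K=G_x^{\Gamma(x)}$, and the sandwich $F<R<J$ comes from transitivity of $J_x^{\Gamma(x)}$ (Proposition~\ref{jx trans}) together with intransitivity of $M_x^{\Gamma(x)}$. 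No coprime-action analysis of $R$, no count of $M$-chief factors, and no factorisation $R=(R\cap M)\cent{R}{M}$ is needed. Without an analogue of this centrality observation --- which your proposal never isolates --- I do not see how your transfer of the $\mathrm{SL}_2(p)^r$-structure from the section $H$ into the faithful group $K$ could be completed; as written, the proposal is an outline whose decisive steps are missing or rest on an unsupported claim.
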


Below we provide some examples of semiprimitive groups which are either shown to be graph-restrictive by 
Theorem~\ref{mainthm1} or Corollary ~\ref{maincor}, or indicate areas where the PSV Conjecture is currently
unknown.

\begin{eg}
Let $q$ be an odd prime and let $P$ be an abelian $q$-group. Let $H= \mathrm C_2$ act on $P$ by inversion
and let 
$K = P \rtimes H$ act on $K/H$. Since $H$ inverts every nontrivial element of $P$,
Theorem~\ref{thm:spcriterion} shows that 
$K$ is semiprimitive in this action. For $q>3$ Theorem~\ref{mainthm1} shows that $K$ is graph-restrictive.
For
$q=3$ the 
situation is quite different: since $H$ normalises $\Phi(P)$ and inverts $P/\Phi(P)$ we see that every
maximal subgroup $M$
 of $P$ is a normal subgroup of $K$ and that $K/M \cong \sym{3}$. Thus Corollary~\ref{maincor} does not
provide any information
 in this case. If $\Phi(P)=1$ it was shown in \cite{giudicimorgan} that $K$ is graph-restrictive, but for
$\Phi(P) \neq 1$
 this is still an open case of the PSV Conjecture.
\end{eg}

\begin{eg}
Let $q$ be a prime, $a$, $n$ and $m$ integers and let $V=(\mathbb F_{q^a})^n$ be the $n$-dimensional vector
space over $\mathbb F_{q^a}$. Let $H=\mathrm{GL}(V)$ and let $W = V \oplus \dots \oplus V$ be the direct sum
of $m$ copies of $V$. Set $K = W \rtimes H$ (where $H$ acts in the natural way on each copy of $V$) and set
$\Omega = W$, the set of vectors of $W$. Then $K$ acts on $\Omega$ and $W$ is a regular normal abelian
subgroup. Theorem~\ref{thm:spcriterion} shows that $K$ is a semiprimitive group on $\Omega$, and if $q>3$
then this group is graph-restrictive by Corollary~\ref{maincor}.

If $q=3$ then $K$ has no normal section isomorphic to a direct product of groups isomorphic to $\mathrm {Sym}(3)$
unless $n=1$. Then $K$ is just the extension of an elementary abelian $3$-group by an involution which acts by inversion. 
As noted in the previous example, $K$ is graph-restrictive in this case.

If $q=2$ then for $K$ to have a normal section isomorphic to a direct product of groups isomorphic to $\mathrm{Alt}(4)$ we have to have $(n,a)=(2,1)$ or $(n,a)=(1,2)$. If $m=1$ then $K$ is isomorphic to $\mathrm{Sym}(4)$ or $\mathrm{Alt}(4)$ acting naturally on four points, so $K$ is graph-restrictive by \cite{gard}.
If $m>1$ then this is an open case of the PSV Conjecture.
\end{eg}

\begin{eg}
Let $q$ be an odd prime, $m$ an integer and let $E =q_+^{1+2m}$ be an extraspecial group. Since $q$ is odd, 
$\aut{E}$ contains a subgroup $H\cong \mathrm{Sp}_{2m}(q)$. Set $K=E \rtimes H$. Then $H$ acts faithfully on
$E$ and both faithfully and irreducibly on $E/\zent{E}$, so Theorem~\ref{thm:spcriterion} shows that $K$ is
semiprimitive on the cosets of $H$. (These groups were shown to be semiprimitive in \cite[Corollary
4.3]{BM}.)
 For $q>3$ we see that $K$ is graph-restrictive by Theorem~\ref{mainthm1}.
For $q=3$, since the only nontrivial normal subgroups of $K$ contained in $E$ are $\zent{E}$ and $E$ itself,
we see there is no normal subgroup $N$ of $K$ such that either $N$ or $N/\zent{E}$ is isomorphic to a direct
product of groups isomorphic to $\sym{3}$. Hence Corollary~\ref{maincor} shows that $K$ is graph-restrictive
in this case. 
\end{eg}

\begin{eg}
Let $\pi=\{p_1,\dots,p_r\}$ be a finite set of primes such that $p_i \equiv -1 \pmod 3$ for $i=1,\dots,r$
and
$p_1<p_2<\dots<p_r$. For each $p_i \in \pi$ with $p_i >2$ let $V_i$ be an extraspecial group of plus type
and order $p_i^3$. If $p_1=2$ then let $V_1 = \mathrm Q_8$.
Let $H=\langle t \rangle \cong \mathrm C_3$ act on each $V_i$ as an element of order three in
$\aut{V_i}$, note that $H$ acts irreducibly on $V_i/\zent{V_i}$.

Set
$K := (V_1 \times \dots \times V_r) \rtimes H$ and let $K$ act on the cosets of $H$ in $K$. Then $K$ is
semiprimitive by Theorem~\ref{thm:spcriterion} with regular normal nilpotent subgroup $V_1 \times \dots
\times V_r$. If $p_1>2$ then $K$ is graph-restrictive by
Theorem~\ref{mainthm1}. On the other hand, if $p_1=2$ then 
$$K/ (\zent{V_1} \times V_2\times \dots \times V_r) \cong\alt{4},$$
and so Corollary~\ref{maincor} provides no information. 
\end{eg}

\section{Some results on semiprimitive permutation groups}
\label{sec:semiprim}

\begin{defn}
\label{defn:sp}
Let $G$ be a permutation group on $\Omega$. A subgroup $N$ of $G$ is called \emph{semiregular} if $N_\omega
= 1$ for all $\omega \in \Omega$.  We say that $G$ is \emph{semiprimitive} if $G$ is transitive and  every
normal subgroup of $G$ is transitive or semiregular.
\end{defn}

In  \cite[Theorem 3.2]{BM} Bereczky and Mar\'oti give necessary and sufficient conditions for a permutation
group with a
regular normal soluble subgroup to be semiprimitive. In the theorem below we remove the solubility part of
the hypothesis.
   We recall some definitions before stating the result. Suppose
that $G$ is a group with subgroups $H$, $K$ and $L$ such that $K$ and $L$ are normal in $G$ and $L \leqslant
K$. Then $H$ acts on the quotient $K/L$ by $(kL)^h = k^hL$ for $h\in H$. The kernel of this action is the
set $\{ h \in H \mid [K,h] \leqslant L\}$ which is the largest normal subgroup $M$ of $H$ such that $[M,K]
\leqslant L$.

\begin{thm}
\label{thm:spcriterion}
Let $G=K\rtimes H$. Then $G$ is semiprimitive on the cosets of $H$ if and only if $H$ acts faithfully on
every nontrivial $H$-invariant quotient of $K$.
\end{thm}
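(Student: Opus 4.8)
The plan is to make the coset action completely explicit and then read off semiprimitivity as a condition on the normal subgroup lattice of $G$. Since $G=K\rtimes H$ with $K$ normal, every right coset of $H$ has a unique representative in $K$ (via $Hk \leftrightarrow k$), and under this identification $K$ acts on itself by right multiplication (so $K$ is a regular normal subgroup) while $H$ acts by conjugation, $Hk\cdot h = Hk^h$; in particular the stabiliser of the base point $H$ is $H$ itself. For a normal subgroup $N\trianglelefteq G$ all point stabilisers are conjugate, so $N$ is semiregular if and only if $N\cap H = 1$, and $N$ is transitive if and only if $NH=G$; note that $N\cap K = K$ already forces transitivity, since then $K\leqslant N$. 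I would prove both implications by contraposition, matching non-semiregularity to a nontrivial normal subgroup of $H$ and non-transitivity to a proper $H$-invariant subgroup of $K$.

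For the forward implication I assume $G$ is not semiprimitive and produce a nontrivial $H$-invariant quotient of $K$ on which $H$ acts unfaithfully. Fix $N\trianglelefteq G$ that is neither transitive nor semiregular, and set $M:=N\cap H$ and $L:=N\cap K$. Non-semiregularity gives $M\neq 1$, and $M\trianglelefteq H$ because $N\trianglelefteq G$; non-transitivity gives $L\neq K$ (otherwise $K\leqslant N$), while $L$ is an $H$-invariant normal subgroup of $K$. The crux is the commutator containment $[M,K]\leqslant [N,K]\leqslant N\cap K = L$, where $[N,K]\leqslant N\cap K$ holds because $N$ and $K$ are both normal in $G$. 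Thus $M$ lies in the kernel $\{h\in H : [K,h]\leqslant L\}$ of the action of $H$ on $K/L$, so that action is unfaithful. I would stress here that the superficially natural choice of the projection $NK/K$ as the offending normal subgroup of $H$ fails (its elements act on $K/L$ as inner automorphisms of $K/L$, not trivially); using $N\cap H$ instead is exactly what makes the commutator estimate succeed.

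For the reverse implication I assume $H$ acts unfaithfully on some nontrivial $H$-invariant quotient $K/L$ and construct a bad normal subgroup. Let $M$ be the kernel of this action, so $1\neq M\trianglelefteq H$, the subgroup $L$ is a proper $H$-invariant normal subgroup of $K$, and $[M,K]\leqslant L$. Put $N:=LM$. I would first verify $N\trianglelefteq G$: it is a subgroup because $M$ normalises the $H$-invariant subgroup $L$, and for $k_0\in K$ and $m\in M$ one has $m^{k_0}=m[m,k_0]\in ML$, using precisely $[M,K]\leqslant L$, so $K$ normalises $LM$; combined with the normality of $L$ and $M$ under $H$ this yields normality in $G$. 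A short orbit computation then shows that the $N$-orbit of the base point is $L^M=L$, a proper subset of $K$, and that $N\cap H = M\neq 1$, so $N$ is neither transitive nor semiregular and $G$ is not semiprimitive.

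The only genuinely delicate step is the reverse direction's verification that $N=LM$ is normal in $G$: this is exactly where the hypothesis $[M,K]\leqslant L$ is consumed. It is worth recording that the same inequality simultaneously characterises unfaithfulness (through the kernel description recalled just before the statement) and guarantees closure of $LM$ under conjugation by $K$; everything else reduces to the orbit--stabiliser translations from the first paragraph.
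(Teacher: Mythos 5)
Your proof is correct and follows essentially the same route as the paper: the forward direction rests on the same containment $[N,K]\leqslant N\cap K$ showing $N\cap H$ lies in the kernel of the action on $K/(N\cap K)$, and the reverse direction constructs the same normal subgroup (your $LM$ is the paper's $BM$) using $[M,K]\leqslant L$ to get closure under conjugation by $K$. The only difference is cosmetic: you phrase both implications contrapositively and make the coset action explicit, whereas the paper argues directly in one direction and by contradiction in the other.
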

\begin{proof}
Suppose that $H$ acts faithfully on every $H$-invariant quotient of $K$ and assume that $N \vartriangleleft
G$. If $K\leqslant N$ then $N$ acts transitively on $G/H$, so we may assume that $M:= K \cap N$ is a
proper
subgroup of $K$. Since $N \vartriangleleft G$ we have that $M$ is $H$-invariant, hence $H$ acts on $K/M$.
Since $[N,K] \leqslant N \cap K = M$ we see that $N$ acts trivially on $K/M$, that is, $N \leqslant
\cent{G}{K/M}$. Hence 
$$N \cap H \leqslant \cent{G}{K/M} \cap H = \cent{H}{K/M}=1,$$
and so $N$ is semiregular, as required.

Now assume that $G$ is semiprimitive and let $M$ be a proper normal subgroup of $ K$ which is $H$-invariant.
Suppose that $H$ does not act faithfully on $K/M$. Then $B: =\cent{H}{K/M}$ is a nontrivial normal subgroup
of $H$. Moreover, $K$ normalises $BM$ since by definition $[K,B] \leqslant M$. Hence $BM$ is a normal
subgroup of
$G$. Now $1\neq B \leqslant BM \cap H$ so $BM$ is not semiregular. If $BM$ acts transitively on $G/H$ then
$G=BMH=MH$ and we have that $K=M(K \cap H)=M$, a contradiction.
\end{proof}

Another way to phrase the above criterion is that for every nontrivial normal subgroup $N$ of $H$ we have
$K=[K,N]$.

The following lemma is \cite[Lemma 2.4]{BM} (see also \cite[Corollary 3.1]{BM}), we offer a different proof
which fits with our approach.

\begin{lem}
\label{lem:quot still sp}
Suppose that $G$ is a semiprimitive group with point stabiliser $H$ and $N$ is a normal intransitive
subgroup of $G$. Then the action of $G/N$ on $HN/N$  is faithful and semiprimitive.
\end{lem}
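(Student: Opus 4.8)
The plan is to realise the action of $G/N$ on $HN/N$ as the action of $G/N$ on the set $\Sigma$ of orbits of $N$ on $\Omega$, and then to transfer semiprimitivity from $G$ to $G/N$ one orbit at a time. Write $H = G_\omega$. Since $N \trianglelefteq G$ is intransitive, its orbits form a nontrivial system of blocks; because $N$ is normal the stabiliser in $G$ of the block $\Delta := \omega^N$ is exactly $HN$, so the $G/N$-action on $\Sigma$ is permutationally isomorphic to that of $G/N$ on the cosets of $HN/N$. The single computation I shall reuse is: if $M \trianglelefteq G$ satisfies $N \leqslant M$ and $M$ is semiregular on $\Omega$, then $M \cap HN = N$. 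Indeed, any $m \in M \cap HN$ can be written $m = hn$ with $h \in H$ and $n \in N$; then $h = mn^{-1} \in M \cap H = M_\omega = 1$, whence $m = n \in N$.

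First I would establish faithfulness. The kernel of the $G/N$-action on $\Sigma$ is $K/N$, where $K = \mathrm{core}_G(HN)$ is the largest normal subgroup of $G$ contained in $HN$; note that $N \leqslant K \leqslant HN$. As $K \trianglelefteq G$ and $G$ is semiprimitive, $K$ is transitive or semiregular on $\Omega$. If $K$ were transitive then $G = KH \leqslant HN$, forcing $G = HN$ and hence $\Omega = \omega^{HN} = \omega^N = \Delta$, contrary to the intransitivity of $N$. Thus $K$ is semiregular, and the displayed computation with $M = K$ yields $K = K \cap HN = N$. Hence $G/N$ acts faithfully on $\Sigma$.

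It remains to verify semiprimitivity. Transitivity of $G/N$ on $\Sigma$ is immediate from the transitivity of $G$ on $\Omega$. Now let $\overline{M} = M/N$ be a normal subgroup of $G/N$, where $M \trianglelefteq G$ and $N \leqslant M$. Since $G$ is semiprimitive, $M$ is transitive or semiregular on $\Omega$. If $M$ is transitive then it is transitive on blocks, so $\overline{M}$ is transitive on $\Sigma$. If $M$ is semiregular then the computation gives $M \cap HN = N$, so the stabiliser $(M \cap HN)/N$ of $\Delta$ in $\overline{M}$ is trivial; since $\overline{M}$ is normal in the transitive group $G/N$, all of its point stabilisers are conjugate and hence trivial, and $\overline{M}$ is semiregular on $\Sigma$.

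The argument is essentially bookkeeping once the key computation is isolated, and it reduces everything to applying the semiprimitivity of $G$ to the preimage $M$. The only step requiring genuine care is faithfulness, where one must correctly identify the kernel of the quotient action with a core lying inside $HN$ and rule out its transitivity; I expect the verification that $\mathrm{core}_G(HN)$ cannot be transitive without forcing $N$ itself to be transitive to be the main (if modest) obstacle.
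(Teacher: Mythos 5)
Your proof is correct and follows essentially the same route as the paper: the same key computation $M \cap HN = N(M\cap H) = N$ for semiregular normal $M$, the same dichotomy applied to preimages of normal subgroups, and the same identification of the kernel with $\mathrm{core}_G(HN)$ ruled transitive by the contradiction $G = HN$. Your framing via the block system of $N$-orbits is only a cosmetic repackaging of the paper's coset-action argument.
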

\begin{proof}
We set $\ba{G}=G/N$ and use the bar notation. Let $\ba{K}$ be a normal subgroup of $\ba{G}$. Then $K$ is
normal in $G$, so either $K$
is transitive, which gives $KH=G$ and so $\ba{G}=\ba{K}\ba{H}$ whence $\ba{K}$ is transitive on
$\ba{G}/\ba{H}$, or $K$ is semiregular. The latter implies $K \cap H = 1$, which gives $K \cap HN = N(K \cap
H) = N$, and therefore $\ba{K} \cap \ba{H} = 1$. Hence $\ba{K}$ is semiregular, as required.

To see that $\ba{G}$ is faithful on $\ba{G}/\ba{H}$, let $\ba{C}=\mathrm{core}_{\ba{G}}(\ba{H})$. Then $N
\leqslant C
\leqslant HN$ and $C$ is normal in $G$. If $C$ were transitive, we would have $G=CH \leqslant HN$, which
yields $N$ 
transitive on $G/H$, a contradiction. Thus $C$ is semiregular, so $C=C \cap HN = N (C\cap H) = N$, whence
$\ba{C}=1$ as required.
\end{proof}

It was shown in \cite[Theorem 3.1]{BM} that a soluble semiprimitive group has a unique regular normal subgroup and that this regular normal subgroup contains all semiregular subgroups and is contained in every transitive subgroup. Below we prove a result for semiprimitive groups which are not necessarily soluble, but we assume the existence of a regular normal soluble subgroup.

\begin{thm}
\label{thm:existence of kernels}
Let $G$ be a semiprimitive group with a soluble regular normal subgroup $K$. Then every transitive normal
subgroup of $G$ contains $K$ and every normal semiregular subgroup is contained in $K$. In particular, $K$
is the unique regular normal subgroup.
\end{thm}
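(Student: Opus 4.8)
The plan is to prove the three assertions in turn, leveraging the semiprimitivity hypothesis and the fact that $K$ is regular (hence both transitive and semiregular). First I would handle the claim that every transitive normal subgroup $T$ of $G$ contains $K$. Since $K$ is regular, it is a minimal transitive normal subgroup in a strong sense, so the natural approach is to study $K \cap T$. The subgroup $K \cap T$ is normal in $G$, so by semiprimitivity it is either transitive or semiregular; I would aim to show it must be transitive, which (since it lies in the regular group $K$) forces $K \cap T = K$, i.e. $K \leqslant T$. To get transitivity of $K \cap T$, I would consider the quotient $K/(K\cap T)$ and use that $T$ is transitive: a standard argument gives $G = TH$ where $H$ is the point stabiliser, and then $K = K \cap TH = (K \cap T)(K \cap H)$-type factorisation should let me conclude, using $K \cap H = 1$ (as $K$ is semiregular).

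Next I would treat the claim that every normal semiregular subgroup $S$ is contained in $K$. Here the key tool is Lemma~\ref{lem:quot still sp}: passing to $\ba{G} = G/K$ (noting $K$ is intransitive only if it is not the whole group, but more precisely $K$ is transitive so this direction needs care). Instead, I would argue directly using solubility of $K$. The product $KS$ is normal in $G$ and contains the transitive subgroup $K$, hence $KS$ is transitive; the interesting object is the image $\ba{S} = KS/K \cong S/(S \cap K)$ inside $\ba{G} = G/K$. The subgroup $\ba{K}$ here is trivial, and I want to show $\ba{S} = 1$. The cleanest route exploits that $S$ is semiregular and $K$ is regular: I expect to show $S$ normalises $K$ with $[S,K] \leqslant S \cap K$, and then use the solubility of $K$ together with a minimal-counterexample or commutator argument to force $S \leqslant K$. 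The coprimality or nilpotency is not assumed here, only solubility of $K$, so I would lean on the derived/lower-central series of $K$ and induct.

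The main obstacle I anticipate is the second assertion, containing semiregular subgroups in $K$, because semiregularity is a much weaker condition than being contained in a specific regular subgroup. The difficulty is that a priori a semiregular normal subgroup $S$ could sit partly outside $K$; I must rule this out. The essential leverage is that $KS$ is transitive and normal, so by Theorem~\ref{thm:spcriterion} (in the equivalent commutator form $K = [K,N]$ for every nontrivial normal subgroup $N$ of the complement) applied to the quotient structure, any genuine "new" part of $S$ modulo $K$ would act nontrivially and force $K$ to equal a proper commutator subgroup, contradicting $K$'s regularity or its solubility via a minimal normal subgroup argument. I would set $M = S \cap K$, observe $M \vartriangleleft G$, and analyse the action of $S$ on $K/M$: if $S \not\leqslant K$ then $\ba{S}$ is a nontrivial normal subgroup of $\ba{G}$ meeting $\ba{H}$ trivially and meeting $\ba{K} = 1$, and I would derive a contradiction with semiprimitivity of the quotient.

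Finally, the uniqueness of the regular normal subgroup follows formally: any regular normal subgroup $K'$ is both transitive and semiregular, so by the first two parts $K \leqslant K'$ and $K' \leqslant K$, giving $K' = K$. This last step is immediate once the two containments are established, so the real content lies entirely in the two containment statements, with the semiregular-containment being the harder of the two.
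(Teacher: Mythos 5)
There is a genuine gap, and it affects both containment arguments: as you have set them up, neither uses the solubility of $K$ — yet the theorem is false without solubility, as the paper's example immediately after the theorem shows (take $G=(T_1\times T_2\times T_3)\rtimes\grp{x}$ with $T_i\cong\alt{5}$ and $H=\grp{D,x}$ for a diagonal $D$; this group has three distinct regular normal subgroups). Concretely, for the first claim your factorisation $K=K\cap TH=(K\cap T)(K\cap H)$ is not valid: the Dedekind/modular law $A(B\cap C)=AB\cap C$ requires $A\leqslant C$, and neither $T$ nor $H$ lies inside $K$. In the example above, with $K=T_2T_3$ and $T=T_1T_2$, one has $K\cap T=T_2$, which is semiregular and proper in $K$, while $(K\cap T)(K\cap H)=T_2\neq K$; so the dichotomy ``transitive or semiregular'' applied to $K\cap T$ resolves the wrong way and your intended contradiction never materialises. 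For the second claim, your plan to work in $\ba{G}=G/K$ cannot be repaired: since $K$ is transitive we have $HK=G$, so the coset action of $G/K$ on $HK/K$ is the trivial action on one point, and Lemma~\ref{lem:quot still sp} applies only to \emph{intransitive} normal subgroups.

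The missing idea is the step where solubility actually enters: one first proves $\cent{G}{K}\leqslant K$. If not, then by Dedekind (legitimate here, since $K\leqslant K\cent{G}{K}$) we may write $K\cent{G}{K}=KY$ with $Y:=K\cent{G}{K}\cap H$ a nontrivial normal subgroup of $H$, and then $[K,Y]\leqslant[K,K\cent{G}{K}]=[K,K]<K$, where properness of $[K,K]$ is precisely solubility of $K\neq 1$; this contradicts the commutator form of Theorem~\ref{thm:spcriterion} (namely $K=[K,N]$ for every nontrivial normal $N\leqslant H$). With this pivot in hand, both containments follow by a minimal counterexample argument. If $T$ is transitive normal with $M:=T\cap K<K$ (respectively $S$ is semiregular normal with $S\not\leqslant K$ and $M:=S\cap K$), then $[T,K]\neq 1$ — otherwise $T\leqslant\cent{G}{K}\leqslant K$ — and $[T,K]\leqslant M$, so $1\neq M$ is an intransitive semiregular normal subgroup; Lemma~\ref{lem:quot still sp} then makes $G/M$ semiprimitive with soluble regular normal subgroup $K/M$ and transitive normal subgroup $T/M$ (respectively semiregular normal $S/M$, using $S\cap HM=M(S\cap H)=M$), and minimality gives the containment upstairs, a contradiction. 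Your instincts — the commutator form of Theorem~\ref{thm:spcriterion}, induction exploiting solubility, and the purely formal uniqueness step — are all correct, but without the pivot $\cent{G}{K}\leqslant K$ you cannot even show $M\neq 1$, and without $M\neq 1$ there is no quotient on which to run the induction.
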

\begin{proof}
Let $G$ and $K$ be as in the statement and let $H$ be a point stabiliser in $G$.
We first prove that $\cent{G}{K} \leqslant K$.
Indeed, assume that $K \neq \cent{G}{K} K$ and let $S \leqslant H$ be such that $KS = K \cent{G}{K}$. 
Then $S$ is normal in $H$ and the solubility of $K$ gives $[K,S] \leqslant [K,K] \neq  K$.
Now Theorem~\ref{thm:spcriterion} shows that $S=1$, a contradiction.

Now we assume that the theorem is false and let $G$ be a counter-example of minimal order, so that $G$ is a
semiprimitive group which has a soluble regular normal subgroup $K$ and $T$ is another  transitive normal
subgroup. Set $M=T \cap K$, then since $G$ is a counter-example to the theorem, $T \cap K \neq K$ so $M$ is
a proper subgroup of $K$. By the previous paragraph $[T,K] \neq 1$. Moreover
$[T,K] \leqslant M < K$ so $1 \neq M$ is intransitive and semiregular. Now Lemma \ref{lem:quot still sp}
shows that
$G/M$ acts semiprimitively  on the cosets of $HM/M$ where $H$ is a point stabiliser in $G$.  Since $THM =
TH=G$, $T/M$ is transitive on the cosets of $H/M$ in $G/M$ and $K \cap HM=M(K \cap H) = M$, so $K/M$ is a
regular normal soluble subgroup of $G/M$.  Since $|G/M| < |G|$, $G/M$ is not a counter-example to the
theorem, so $T/M$ contains $K/ M$. This implies $T$ contains $K$, a contradiction to $G$ being a
counter-example.

The second case is similar and is omitted.
\end{proof}

The following example shows that the conclusion of the above theorem is false without the solubility
hypothesis. Set 
$$G=( T_1\times T_2\times T_3 ) : \langle x \rangle$$
where $T_1 \cong T_2 \cong T_3 \cong \alt{5}$ and $x$ is an involution that acts as an outer automorphism on each
$T_i$ for $i=1,2,3$.
 Let $D$ be a full diagonal subgroup of $T_1 \times T_2 \times T_3$ which is normalised by $x$ and let $H=\langle
D, x \rangle$.
 Let $G$ act on $G/H$ and note that $T_1T_2$, $T_2T_3$ and $T_1T_3$ are three distinct
regular normal subgroups. Since $G= T_1T_2 \rtimes H$ and $H$ acts faithfully on $T_1T_2$, $T_1T_2 / T_1$
and on $T_1T_2/T_2$, $G$ is semiprimitive. 

\begin{lem}
\label{lem: fst(g) eq f(k)}
Let $G$ be a semiprimitive group with a soluble regular normal subgroup $K$. Then every normal nilpotent
subgroup is contained in $K$. In particular, $\mathbf F(G)=\mathbf F( {K})$.
\end{lem}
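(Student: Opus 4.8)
The plan is to test an arbitrary normal nilpotent subgroup $N$ of $G$ against the semiprimitive dichotomy: $N$ is either semiregular or transitive. In the semiregular case there is nothing to prove, since Theorem~\ref{thm:existence of kernels} immediately gives $N \leqslant K$. So the entire content of the lemma lies in excluding the possibility that a normal nilpotent subgroup is transitive and strictly larger than $K$; this, and not the semiregular case, is where I expect the work to be.

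Suppose then that $N$ is transitive. By Theorem~\ref{thm:existence of kernels} we have $K \leqslant N$, and I would aim to show $N = K$. Fix a point stabiliser $H$ and set $U = N \cap H$. Since $K$ is already transitive and contained in $N$, a Frattini-type argument gives $N = KU$, while $U \cap K = H \cap K = 1$; hence $|U| = |N:K|$ and it suffices to prove $U = 1$. The key structural observation is that $U$ is normal in $H$: for $h \in H$ we have $U^h = N^h \cap H^h = N \cap H = U$, using that $N$ is normal in $G$ and that $h$ normalises $H$.

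Now I would invoke the semiprimitivity criterion. If $U \neq 1$, then $U$ is a nontrivial normal subgroup of $H$, so the reformulation of Theorem~\ref{thm:spcriterion} yields $K = [K, U]$. The nilpotency of $N$ then delivers the contradiction: since $K$ and $U$ both lie in $N$, an easy induction along the lower central series shows $[K, \underbrace{U, \dots, U}_{m}] \leqslant \gamma_{m+1}(N)$ for every $m$, whereas substituting $K = [K,U]$ into itself repeatedly gives $K = [K, \underbrace{U, \dots, U}_{m}]$ for arbitrarily large $m$. Taking $m$ to be the nilpotency class of $N$ forces $K \leqslant \gamma_{m+1}(N) = 1$, a contradiction. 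Hence $U = 1$ and $N = K$, so $N \leqslant K$, as required. The displayed identity $\fit{G} = \fit{K}$ then follows formally: $\fit{G}$ is normal and nilpotent, so by the above $\fit{G} \leqslant K$, and being normal in $K$ it lies in $\fit{K}$; conversely $\fit{K}$ is characteristic in $K$, hence normal in $G$, and nilpotent, so $\fit{K} \leqslant \fit{G}$.

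The only genuine obstacle is the transitive case. Its crux is recognising that the point stabiliser $U = N \cap H$ taken inside $N$ is forced to be normal in $H$, so that the semiprimitivity criterion applies and expresses $K$ itself as the commutator $[K,U]$; the collision of this with the commutator collapse available in the nilpotent group $N$ is what closes the argument. It is worth remarking that the solubility of $K$ enters the proof only indirectly, through the appeal to Theorem~\ref{thm:existence of kernels} in the semiregular case.
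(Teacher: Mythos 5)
Your proposal is correct and takes essentially the same approach as the paper: both dispose of the semiregular case and reduce to the transitive case via Theorem~\ref{thm:existence of kernels}, and then play the semiprimitivity criterion (Theorem~\ref{thm:spcriterion}) against the nilpotency of $N$. The only difference is packaging — the paper first uses nilpotency to get $[K,N]<K$ and then faithfulness of $H$ on $K/[K,N]$ to force $N\cap H=1$, whereas you first extract $K=[K,N\cap H]$ from the reformulated criterion and then run the lower-central-series iteration, which is exactly the proof of the standard fact the paper invokes.
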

\begin{proof}
Suppose that $N$ is a normal nilpotent subgroup of $G$ not contained in $K$. Since $G$ is semiprimitive
either $N$ is semiregular or $N$ is transitive. Theorem~\ref{thm:existence of kernels} shows that  $K
\leqslant N$ and $N$ is transitive. Since $N$ is nilpotent we have $[K,N] < K$, then $N \cap H \leqslant
\cent{H}{K/[K,N]}=1$ by Theorem~\ref{thm:spcriterion}. Thus $N$ is regular and $N=K$.
\end{proof}

We note that our next lemma generalises \cite[Theorem 3.4]{BM}.

\begin{lem}
\label{lem: fit(H) and K coprime orders}
Let $G$ be a semiprimitive group with point stabiliser $H$ and regular normal nilpotent subgroup $K$. If
there is a prime $p$ such that $\op{p}{H} \neq 1$, then $p$ does not divide $|K|$.
\end{lem}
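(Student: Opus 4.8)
The plan is to assume $p \mid |K|$ and contradict the semiprimitivity criterion. Since $K$ is regular and normal we have $G = K \rtimes H$, and nilpotence of $K$ lets me write $K = P \times K'$, where $P$ is the Sylow $p$-subgroup of $K$ and $K'$ is its Hall $p'$-subgroup. Both factors are characteristic in $K$ and hence normal in $G$; in particular they are invariant under $Q := \op{p}{H}$, which is nontrivial by hypothesis and normal in $H$.

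First I would extract the commutator identity $[P,Q] = P$. By the reformulation recorded immediately after Theorem~\ref{thm:spcriterion}, every nontrivial normal subgroup $N$ of $H$ satisfies $[K,N] = K$; applied to $Q$ this gives $[K,Q] = K$. Because $P$ and $K'$ are $Q$-invariant and centralise one another, one checks that $[ab,q] = [a,q][b,q]$ for $a \in P$, $b \in K'$, $q \in Q$, so that $[K,Q] = [P,Q] \times [K',Q]$ with $[P,Q] \leqslant P$ and $[K',Q] \leqslant K'$. Comparing with $K = P \times K'$ forces $[P,Q] = P$.

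Then I would reach a contradiction inside a $p$-group. Since $P \trianglelefteq G$, the product $R := PQ$ is a subgroup, and being generated by two $p$-groups with $P$ normal it is itself a $p$-group, hence nilpotent, with $P \trianglelefteq R$. For a nontrivial normal subgroup of a nilpotent group the repeated commutators $[P,R] \geqslant [P,R,R] \geqslant \cdots$ reach the identity (since $[P,\underbrace{R,\dots,R}_{k}] \leqslant \gamma_{k+1}(R)$), so $[P,R] < P$ whenever $P \neq 1$. As $Q \leqslant R$ we get $[P,Q] \leqslant [P,R] < P$, contradicting $[P,Q] = P$. Hence $P = 1$, that is, $p \nmid |K|$.

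The computation is short and the only ingredient that is not pure bookkeeping is the standard $p$-group fact $[P,R] < P$; I expect no genuine obstacle beyond being careful that $P$ and $K'$ are indeed $Q$-invariant (which is where nilpotence of $K$ enters, via their being characteristic in $K \trianglelefteq G$), so that the commutator decomposition in the second paragraph is legitimate.
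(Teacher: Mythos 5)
Your proof is correct, but it takes a genuinely different route from the paper. The paper's argument is quotient-based: it sets $N=\op{p'}{K}$, invokes Lemma~\ref{lem:quot still sp} to see that $G/N$ is semiprimitive with regular normal subgroup the $p$-group $K/N$, observes that $\op{p}{H}K/N$ is a normal $p$-subgroup of $G/N$, and then applies Lemma~\ref{lem: fst(g) eq f(k)} to force $\op{p}{H}\leqslant K\cap H=1$. You instead bypass that machinery entirely: you use only the commutator reformulation of Theorem~\ref{thm:spcriterion} (that $[K,N]=K$ for every nontrivial normal $N\trianglelefteq H$), split $K=P\times K'$ into its Sylow $p$- and Hall $p'$-parts (characteristic in $K$, hence normal in $G$), deduce $[P,Q]=P$ for $Q=\op{p}{H}$, and contradict this with the elementary fact that a nontrivial normal subgroup $P$ of the nilpotent (indeed $p$-) group $PQ$ satisfies $[P,PQ]<P$. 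All steps check out: the decomposition $[K,Q]=[P,Q]\times[K',Q]$ is legitimate precisely because $P$ and $K'$ are $Q$-invariant and centralise each other, and the descending chain $[P,R]\geqslant[P,R,R]\geqslant\cdots$ does terminate at the identity by nilpotence of $R=PQ$, so $[P,Q]=P$ with $P\neq 1$ is impossible. What each approach buys: yours is more self-contained and elementary, needing neither Lemma~\ref{lem:quot still sp} nor Lemma~\ref{lem: fst(g) eq f(k)} (and hence not Theorem~\ref{thm:existence of kernels} either), and it never even invokes $K\cap H=1$ at the final step; the paper's proof, by contrast, is essentially a one-liner given the lemmas already established in Section~\ref{sec:semiprim}, and it exemplifies the quotient-and-reduce technique that the paper reuses elsewhere (for example in Theorem~\ref{thm:existence of kernels} itself).
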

\begin{proof}
Assume $p$ divides $|K|$ and note that $N=\op{p'}{K}$ is a proper semiregular subgroup of $K$. By
Lemma~\ref{lem:quot still sp} the group
$G/N$ is semiprimitive with point stabiliser $HN/N$ and regular normal subgroup $K/N$. Now $K/N$ and
$\op{p}{H}K/N$ are normal $p$-subgroups of $G/N$ so Lemma~\ref{lem: fst(g) eq f(k)} implies that
$\op{p}{H}K/N = K/N$. This implies $\op{p}{H} \leqslant K \cap H = 1$, a contradiction.
\end{proof}

\section{Locally semiprimitive graphs with regular normal nilpotent subgroups}

In this section we assume that $G$ is a group acting faithfully and vertex-transitively on a connected
finite graph $\Gamma$. Moreover we assume that the local action is semiprimitive with a regular normal
nilpotent subgroup. We will prove Theorems~\ref{mainthm1} and \ref{mainthm2}.

If $A$ is a set of vertices of $\Gamma$ and $H \leqslant G_A$, we write $H^A$ for the permutation group 
induced on $A$ by $H$, in most cases $A$ will be the neighbourhood of some vertex.
  We fix an edge $e=\{x,y\}$ of $\Gamma$ and begin by assuming that $\vst{xy}{1} \neq 1$. 

We will sometimes use the following two results without reference.

\begin{prop}
Let $R$ be a subgroup of $G_x$ and suppose that $R^{\Gamma(x)}$ is semiregular. Then $R \cap G_{xy}
\leqslant \vstx{1}$.
\end{prop}
\begin{proof}
We have $R^{\Gamma(x)} \cap (G_{xy})^{\Gamma(x)}=1$, that is $R\vstx{1} \cap G_{xy} \leqslant G_x^{[1]}$.
The Dedekind identity gives
$$(R \cap G_{xy}) \vstx{1} = R\vstx{1} \cap G_{xy} \leqslant \vstx{1}$$
and so $R\cap G_{xy} \leqslant \vstx{1}$ as required.
\end{proof}

\begin{lem}
\label{lem:fundlem}
Suppose that $K\leqslant  G_x \cap G_y$. If either (a) or (b) below hold, then $K=1$.
\begin{itemize}
\item[(a)] The groups $\norm{G_x}{K}^{\Gamma(x)}$ and $\norm{G_y}{K}^{\Gamma(y)}$ are transitive.
\item[(b)] The group $\norm{G_x}{K}^{\Gamma(x)}$ is transitive and $\norm{G_e}{K} \not\leqslant G_x \cap G_y$.
\end{itemize}
\end{lem}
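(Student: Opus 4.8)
The plan is to reduce case (b) to case (a) and then run a connectivity argument, the engine of which is the elementary observation that $\norm{G_x}{K}$, normalising $K$, must permute the set of $K$-fixed vertices inside $\Gamma(x)$. Indeed, if $z \in \Gamma(x)$ is fixed by $K$ and $n \in \norm{G_x}{K}$, then for every $k \in K$ one computes $(z^n)^k = (z^{nkn^{-1}})^n = z^n$, since $nkn^{-1} \in K$ fixes $z$. As $K \leqslant G_x \cap G_y$, the neighbour $y$ is such a fixed point, so this $\norm{G_x}{K}$-invariant subset of $\Gamma(x)$ is nonempty; transitivity of $\norm{G_x}{K}^{\Gamma(x)}$ then forces it to be all of $\Gamma(x)$, i.e. $K \leqslant \vstx{1}$. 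The same argument at $y$, when its hypothesis is available, gives $K \leqslant \vsty{1}$.

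To handle (b), I would first note that $[G_e : G_x \cap G_y] \leqslant 2$, so any $g \in \norm{G_e}{K} \setminus (G_x \cap G_y)$ must interchange $x$ and $y$. Since $g$ normalises $K$ and $x^g = y$, conjugation gives $\norm{G_y}{K} = (\norm{G_x}{K})^g$, and because $\Gamma(y) = \Gamma(x)^g$ the transitivity of $\norm{G_x}{K}$ on $\Gamma(x)$ transfers to transitivity of $\norm{G_y}{K}$ on $\Gamma(y)$. Thus the hypotheses of (a) are met, and it suffices to prove (a).

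For (a), the opening step shows that $K$ fixes $\Gamma(x) \cup \Gamma(y)$ pointwise. I would then call a vertex $z$ \emph{nice} if $K$ fixes $\Gamma(z)$ pointwise and $\norm{G_z}{K}^{\Gamma(z)}$ is transitive, so that $x$ and $y$ are nice. The key propagation claim is: if $u$ and $v$ are nice and adjacent, then every $w \in \Gamma(u)$ is nice. To see this, pick $g \in \norm{G_u}{K}$ with $v^g = w$ (using that $u$ is nice); then $K = K^g$ fixes $\Gamma(v)^g = \Gamma(w)$ pointwise, and $\norm{G_w}{K} = (\norm{G_v}{K})^g$ is transitive on $\Gamma(w) = \Gamma(v)^g$ (using that $v$ is nice). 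Starting from the nice adjacent pair $\{x,y\}$ and using connectedness of $\Gamma$, niceness spreads to every vertex, so $K$ fixes every vertex; faithfulness of the action then yields $K = 1$.

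I expect the main obstacle to be the bookkeeping in this propagation step. In order to spread niceness I must carry \emph{both} the pointwise-fixing of the neighbourhood and the transitivity of the local normaliser along each edge, and the argument only advances across an edge whose \emph{two} endpoints are already nice. This is precisely why I would build the transitivity of $\norm{G_z}{K}^{\Gamma(z)}$ into the definition of \emph{nice}, and why the correct seed for the induction is the nice edge $\{x,y\}$ rather than a single nice vertex; getting this coupling right, and checking that the conjugating elements indeed carry both properties simultaneously, is where the care is needed.
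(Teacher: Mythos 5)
Your proof is correct and takes essentially the same route as the paper: there one sets $H=\grp{\norm{G_x}{K},\norm{G_y}{K}}$, observes that $H$ normalises $K$ and acts edge-transitively because $\Gamma$ is connected, and then conjugates the containment $K\leqslant G_x\cap G_y$ to every edge to conclude that $K$ fixes all vertices; your propagation of \emph{nice} vertices is precisely an inline proof of that edge-transitivity fact, with $K$ carried along by conjugation in exactly the same way. The only additions are that you spell out the reduction of case (b) to case (a), which the paper omits as ``similar'', and the preliminary step $K\leqslant \vstx{1}$, which is harmless but not needed in the paper's packaging.
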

\begin{proof}
Suppose that (a) holds and set $H = \grp{ \norm{G_x}{K},\norm{G_y}{K}} \leqslant \norm{G}{K}$. Since
$\Gamma$ is connected, $H$ acts edge-transitively. Let $u$ be any vertex of $\Gamma$ and let $v$ be adjacent
to $u$. Then there exists $h\in H$ such that $\{x,y\}^h = \{u,v\}$. Now we obtain 
$$K = K^h \leqslant  (G_x \cap G_y)^h = G_u \cap G_ v \leqslant  G_{u}$$
whence $K$ fixes every vertex of $\Gamma$, and therefore $K=1$. The case that (b) holds is similar and is
omitted.
\end{proof}

\begin{lem}
\label{lem:th-w}
There exists a prime $p$ such that $\vst{xy}{1}$, $\fst{\vst{x}{1}}$ and $\fst{G_{xy}}$ are $p$-groups. In
particular $\vst{xy}{1} \leqslant \op{p}{\vst{x}{1}}$.
\end{lem}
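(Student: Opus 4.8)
The plan is to treat this as a Thompson--Wielandt theorem tailored to our hypotheses: the real content is to produce a single prime $p$ that simultaneously controls all three groups, after which the closing containment is a formality. I would begin with the structural bookkeeping that costs nothing. Since $\vstx{1}$ fixes $y\in\Gamma(x)$ we have $\vstx{1}\leqslant G_{xy}$, and $\vst{xy}{1}=\vstx{1}\cap\vsty{1}\trianglelefteq\vstx{1}\trianglelefteq G_{xy}$. As the generalised Fitting subgroup of a normal subgroup is contained in that of the ambient group, this yields $\fst{\vst{xy}{1}}\leqslant\fst{\vstx{1}}\leqslant\fst{G_{xy}}$. Hence once I prove that $\fst{G_{xy}}$ is a $p$-group, the two $\fst{\cdot}$ assertions for $\vstx{1}$ and $\vst{xy}{1}$ follow at once with the \emph{same} prime; the only genuinely separate task is to show that $\vst{xy}{1}$ is a $p$-group. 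I also record the maps I will lean on: writing $H_x=G_{xy}^{\Gamma(x)}$ and $H_y=G_{xy}^{\Gamma(y)}$, these are exactly the point stabilisers in the semiprimitive local actions $G_x^{\Gamma(x)}$, $G_y^{\Gamma(y)}$ with regular normal nilpotent subgroups $R_x,R_y$ of order $d$; moreover $(\vsty{1})^{\Gamma(x)}\trianglelefteq H_x$ and $(\vstx{1})^{\Gamma(y)}\trianglelefteq H_y$, and $G_{xy}/\vst{xy}{1}$ embeds into $H_x\times H_y$.

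The seed of the whole argument is the Thompson--Wielandt theorem: since $\Gamma$ is connected and $\vst{xy}{1}\neq1$, there is a prime $p$ with $\fst{\vst{xy}{1}}$ a $p$-group; equivalently $\fit{\vst{xy}{1}}=\op{p}{\vst{xy}{1}}$ is self-centralising in $\vst{xy}{1}$ and $\vst{xy}{1}$ has no components. Establishing this (or importing it from the literature) is the main obstacle, and everything afterwards is the propagation of $p$ outward using semiprimitivity.

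For the propagation I would use the fundamental Lemma~\ref{lem:fundlem}, whose transitivity hypotheses must be manufactured. The decisive feature is that for $\ell\neq p$ the subgroup $\op{\ell}{\vstx{1}}$ is characteristic in $\vstx{1}\trianglelefteq G_x$, hence normal in $G_x$, so $\norm{G_x}{\op{\ell}{\vstx{1}}}^{\Gamma(x)}=G_x^{\Gamma(x)}$ is automatically transitive. The seed gives $\op{\ell}{\vstx{1}}\cap\vsty{1}\leqslant\op{\ell}{\vst{xy}{1}}=1$, so $\op{\ell}{\vstx{1}}$ injects into $H_y$ as a normal $\ell$-subgroup; by Lemma~\ref{lem: fit(H) and K coprime orders} this can happen only if $\ell\nmid d$, and if this image were normal in all of $G_y^{\Gamma(y)}$ then Lemma~\ref{lem: fst(g) eq f(k)} would place it inside $R_y$, forcing it to vanish. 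The plan is to use the semiprimitive criterion of Theorem~\ref{thm:spcriterion} --- that every nontrivial normal subgroup $N$ of a point stabiliser satisfies $[R,N]=R$ --- to supply the transitivity missing on the $y$-side (or else to trigger Lemma~\ref{lem:fundlem}(b)), and thereby force $\op{\ell}{\vstx{1}}=1$ for every $\ell\neq p$. Ruling out components in the same fashion gives $\fst{\vstx{1}}=\op{p}{\vstx{1}}$, and the embedding $G_{xy}/\vst{xy}{1}\hookrightarrow H_x\times H_y$ then propagates the conclusion to $\fst{G_{xy}}$.

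There remains the upgrade from ``$\fst{\vst{xy}{1}}$ is a $p$-group'' to ``$\vst{xy}{1}$ is a $p$-group'', and this is the subtle point, since a self-centralising normal $p$-subgroup does not by itself force a $p$-group (witness $\sym4$). Here the would-be $p'$-part sits on top of $P=\op{p}{\vst{xy}{1}}$ rather than inside a normal $\ell$-subgroup, so I would argue by coprime action: any $p'$-element of $\vst{xy}{1}$ acts faithfully on $P\trianglelefteq G_{xy}$ while centralising $\Gamma(x)\cup\Gamma(y)$, and the plan is to trace this action through the second neighbourhoods and use the semiprimitive structure of the local point stabilisers to show it cannot exist. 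With $\vst{xy}{1}$ a $p$-group, the final clause is immediate: $\vst{xy}{1}\trianglelefteq\vstx{1}$ is then a normal $p$-subgroup, so $\vst{xy}{1}\leqslant\op{p}{\vstx{1}}$. The two delicate places throughout are the Thompson--Wielandt seed and the coordination of a single prime $p$ across all three groups, both of which rest on Theorem~\ref{thm:spcriterion} and Lemma~\ref{lem: fit(H) and K coprime orders}.
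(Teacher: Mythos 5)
Your opening and closing reductions are correct and coincide with what the paper uses: since $\vst{xy}{1}\trianglelefteq\vstx{1}\trianglelefteq G_{xy}$ one gets $\fst{\vst{xy}{1}}\leqslant\fst{\vstx{1}}\leqslant\fst{G_{xy}}$, and once $\vst{xy}{1}$ is known to be a $p$-group the containment $\vst{xy}{1}\leqslant\op{p}{\vstx{1}}$ is immediate. The problem is that everything in between --- the existence of a \emph{single} prime $p$ such that $\vst{xy}{1}$ and $\fst{G_{xy}}$ are both $p$-groups --- is precisely what the paper does not prove: its entire proof is the citation of \cite[Corollary 3]{spigath-w}, a Thompson--Wielandt-type theorem valid for connected arc-transitive graphs with no local hypothesis, plus the containment $\fst{\vstx{1}}\leqslant\fst{G_{xy}}$. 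Your proposal tries to rebuild that theorem, and the two steps carrying all of its difficulty are exactly the ones you leave open. First, your ``seed'' (that $\fst{\vst{xy}{1}}$ is a $p$-group) is not established: you say it should be ``established or imported from the literature'', but the classical Thompson--Wielandt theorem you invoke requires local (quasi)primitivity, which is not available in this setting; the version free of local hypotheses \emph{is} the result of Spiga being cited, so at its foundation the proposal either assumes what is to be proved or proves nothing. Second, the upgrade from ``$\fst{\vst{xy}{1}}$ is a $p$-group'' to ``$\vst{xy}{1}$ is a $p$-group'', which you rightly flag as the subtle point, is only a statement of intent (``trace this action through the second neighbourhoods''); this coprime-action analysis is the hard core of every proof of this type, and none of it is supplied.

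The intermediate propagation also does not close as sketched. For $K=\op{\ell}{\vstx{1}}$ with $\ell\neq p$ you correctly get $K\cap\vsty{1}=1$, so $K$ embeds as a normal $\ell$-subgroup of the point stabiliser $G_{xy}^{\Gamma(y)}$ of the local action at $y$; but Lemma~\ref{lem: fit(H) and K coprime orders} then yields only that $\ell\nmid d$, not that $K=1$, and Lemma~\ref{lem: fst(g) eq f(k)} cannot be applied because $K^{\Gamma(y)}$ is normal merely in the point stabiliser $G_{xy}^{\Gamma(y)}$, not in $G_y^{\Gamma(y)}$. To trigger Lemma~\ref{lem:fundlem} you would need $\norm{G_y}{K}^{\Gamma(y)}$ transitive (for $\norm{G_y}{K}\geqslant G_{xy}$ this amounts to local $2$-transitivity, which is not assumed) or $\norm{G_{\{x,y\}}}{K}\not\leqslant G_x\cap G_y$ (an element swapping $x$ and $y$ conjugates $K$ to $\op{\ell}{\vsty{1}}$, so there is no reason it should normalise the asymmetrically defined $K$); neither hypothesis is verified, and Theorem~\ref{thm:spcriterion} alone does not produce them. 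The same objection applies to your last step, passing from $\fst{\vstx{1}}$ and $\fst{\vsty{1}}$ to $\fst{G_{xy}}$ via the embedding of $G_{xy}/\vst{xy}{1}$ into $H_x\times H_y$: the generalised Fitting subgroup of $G_{xy}$ is not controlled by such an embedding without further argument. In short, the proposal is a reasonable outline of how a Thompson--Wielandt theorem might be attacked, but every step with genuine content is missing; the correct blind proof here is to quote the known strong form of that theorem, which is exactly what the paper does.
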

\begin{proof}
This follows from \cite[Corollary 3]{spigath-w} and the fact that  $\fst{\vstx{1}} \leqslant \fst{G_{xy}}$. 
\end{proof}

We now establish some notation that will hold for the remainder of the paper. Recall that, for a $p$-group
$P$, $\Omega\zent{P}$ is the subgroup of $\zent{P}$ generated by the elements of order $p$. We set
\begin{eqnarray*}
Q_x		& 	=	&	\op{p}{\vst{x}{1}}, \\
L_x 	&	= 	&	\langle (Q_xQ_y) ^{G_x} \rangle, \\
R_0 	&	= 	&	\op{p'}{L_x}, \\
Z_{xy} 	&	=	& 	 \Omega \zent{Q_xQ_y}, \\
Z_x 		& 	=	&  \grp{Z_{xy}^{ G_x }}.\\
\end{eqnarray*}

\begin{lem}
\label{firstlem}
The following hold:
\begin{itemize}
\item[(i)] $Q_y^{\Gamma(x)} \neq 1$;
\item[(ii)] $L_x$ is transitive on $\Gamma(x)$.
\end{itemize}
\end{lem}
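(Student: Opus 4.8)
The plan is to prove (i) by contradiction and then read off (ii) almost for free from the semiprimitivity of the local action, so essentially the whole weight of the lemma sits on (i).

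For (i) I would suppose $Q_y^{\Gamma(x)}=1$ and aim for a contradiction. Since $x\in\Gamma(y)$ we have $Q_y\leqslant\vsty{1}\leqslant G_x$, and the assumption means $Q_y$ also lies in $\vstx{1}$; hence $Q_y\leqslant\vstx{1}\cap\vsty{1}=\vst{xy}{1}$. Interchanging the roles of $x$ and $y$ in Lemma~\ref{lem:th-w} (the prime is the same, being the unique prime dividing $|\vst{xy}{1}|$) gives the reverse inclusion $\vst{xy}{1}\leqslant\op{p}{\vsty{1}}=Q_y$, so that $Q_y=\vst{xy}{1}$.

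The crucial move is to promote this to $Q_x=Q_y$, which is what makes the fundamental lemma applicable. By arc-transitivity choose $g\in G$ with $x^g=y$ and $y^g=x$. Then $g$ stabilises the edge $\{x,y\}$, so it normalises the kernel $\vst{xy}{1}=Q_y$; moreover $g$ carries $\vstx{1}$ to $\vsty{1}$ and hence $Q_x=\op{p}{\vstx{1}}$ to $\op{p}{\vsty{1}}=Q_y$. Comparing $Q_x^g=Q_y=Q_y^g$ yields $Q_x=Q_y$. Setting $K:=Q_x=Q_y=\vst{xy}{1}$, and using that $\op{p}{\vstx{1}}$ is characteristic in $\vstx{1}\trianglelefteq G_x$, we get $K=Q_x\trianglelefteq G_x$ and likewise $K=Q_y\trianglelefteq G_y$; hence $\norm{G_x}{K}=G_x$ and $\norm{G_y}{K}=G_y$, so $\norm{G_x}{K}^{\Gamma(x)}=G_x^{\Gamma(x)}$ and $\norm{G_y}{K}^{\Gamma(y)}=G_y^{\Gamma(y)}$ are transitive. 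Lemma~\ref{lem:fundlem}(a) then forces $K=1$, contradicting $\vst{xy}{1}\neq1$; this establishes (i). I expect this passage from the one-sided hypothesis to the symmetric, doubly-normal subgroup $K$ to be the main obstacle, since it is exactly here that arc-transitivity and the common prime furnished by Lemma~\ref{lem:th-w} must be combined.

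For (ii), note that $L_x=\grp{(Q_xQ_y)^{G_x}}$ is normal in $G_x$, so $L_x^{\Gamma(x)}$ is a normal subgroup of the semiprimitive group $G_x^{\Gamma(x)}$; it is nontrivial because $Q_y\leqslant L_x$ and $Q_y^{\Gamma(x)}\neq1$ by (i). Semiprimitivity leaves only the options transitive or semiregular. But $Q_y$ fixes the vertex $y\in\Gamma(x)$ whilst acting nontrivially on $\Gamma(x)$, so the stabiliser of the point $y$ in $L_x^{\Gamma(x)}$ is nontrivial; therefore $L_x^{\Gamma(x)}$ is not semiregular and must be transitive, giving (ii).
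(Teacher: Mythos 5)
Your proof is correct and takes essentially the same route as the paper's: part (i) is proved by contradiction, upgrading $Q_y\leqslant\vstx{1}$ to $Q_y=Q_x$ and then applying Lemma~\ref{lem:fundlem}(a), and part (ii) follows exactly as in the paper from normality of $L_x$ in $G_x$ together with semiprimitivity of $G_x^{\Gamma(x)}$ and the fact that $1\neq Q_y^{\Gamma(x)}$ lies in a point stabiliser. The only difference is that you carefully justify the step ``$Q_y\leqslant\vstx{1}$ implies $Q_y=Q_x$'' (via $Q_y=\vst{xy}{1}$, the symmetry of Lemma~\ref{lem:th-w} in $x$ and $y$, and an arc-reversing element normalising $\vst{xy}{1}$), a step the paper asserts without detail; your filling-in is valid.
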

\begin{proof}
If (i) is false then $Q_y \leqslant \vstx{1}$ and so $Q_y=Q_x$. From Lemma~\ref{lem:fundlem} it follows that
$Q_x = Q_y = 1$, a contradiction since $\vst{xy}{1} \leqslant Q_x \cap Q_y$ by Lemma \ref{lem:th-w}. Clearly
$L_x$ is normal in $G_x$ and by part (i) we have that $1 \neq Q_y^{\Gamma(x)} \leqslant L_x^{\Gamma(x)} \cap
G_{xy}^{\Gamma(x)}$. Hence $L_x$ is not semiregular, so $L_x^{\Gamma(x)}$ is transitive.
\end{proof}

By Theorem~\ref{thm:existence of kernels},  $L_x^{\Gamma(x)}$ contains the nilpotent regular normal subgroup
of $G_x^{\Gamma(x)}$. We let $R$ be the full pre-image of this subgroup, so  $L_x \cap \vstx{1} \leqslant R
\leqslant L_x$ and $R^{\Gamma(x)}$ is the nilpotent normal regular subgroup of $G_x^{\Gamma(x)}$.

\begin{lem}
\label{nilp reg nml coprime to p}
The order of $R^{\Gamma(x)}$ is coprime to $p$.
\end{lem}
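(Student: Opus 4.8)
My plan is to deduce the result by a direct application of Lemma~\ref{lem: fit(H) and K coprime orders} to the local semiprimitive group $G_x^{\Gamma(x)}$. By our standing hypothesis, $G_x^{\Gamma(x)}$ is semiprimitive, and by construction $R^{\Gamma(x)}$ is its regular normal nilpotent subgroup. The point stabiliser of the vertex $y$ in this action is precisely $G_{xy}^{\Gamma(x)}$, since a permutation $g^{\Gamma(x)}$ with $g \in G_x$ fixes $y$ if and only if $g \in G_{xy}$. Lemma~\ref{lem: fit(H) and K coprime orders} then tells us that if this point stabiliser possesses a nontrivial normal $p$-subgroup, the prime $p$ cannot divide the order of the regular normal nilpotent subgroup. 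Thus the entire problem reduces to exhibiting a nontrivial normal $p$-subgroup of $G_{xy}^{\Gamma(x)}$.

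The natural candidate is $Q_y^{\Gamma(x)}$, the image of $Q_y = \op{p}{\vst{y}{1}}$ under restriction to $\Gamma(x)$. First I would observe that $Q_y \leqslant \vst{y}{1} \leqslant G_{xy}$, since the kernel $\vst{y}{1}$ fixes every neighbour of $y$ and in particular fixes $x$; hence $Q_y^{\Gamma(x)}$ really is a subgroup of $G_{xy}^{\Gamma(x)}$, and it is a $p$-group because $Q_y$ is a $p$-group by Lemma~\ref{lem:th-w}. For normality I would use that $Q_y = \op{p}{\vst{y}{1}}$ is characteristic in $\vst{y}{1}$, which is normal in $G_y$; therefore $Q_y$ is normalised by $G_{xy} \leqslant G_y$, and passing to the induced action on $\Gamma(x)$ gives $Q_y^{\Gamma(x)} \trianglelefteq G_{xy}^{\Gamma(x)}$. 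Finally, $Q_y^{\Gamma(x)} \neq 1$ is exactly Lemma~\ref{firstlem}(i).

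Putting these together, $Q_y^{\Gamma(x)}$ is a nontrivial normal $p$-subgroup of $G_{xy}^{\Gamma(x)}$, so $\op{p}{G_{xy}^{\Gamma(x)}} \neq 1$. Applying Lemma~\ref{lem: fit(H) and K coprime orders}, with $G_x^{\Gamma(x)}$ in the role of the semiprimitive group, $G_{xy}^{\Gamma(x)}$ as the point stabiliser, and $R^{\Gamma(x)}$ as the regular normal nilpotent subgroup, then yields $p \nmid |R^{\Gamma(x)}|$. The argument is short and carries no serious obstacle; the only points requiring genuine care are the correct identification of the point stabiliser of $G_x^{\Gamma(x)}$ as $G_{xy}^{\Gamma(x)}$, and the observation that it is $Q_y$, rather than merely the larger group $\vst{y}{1}$, whose normality in $G_y$ must be invoked so that the normality of $Q_y^{\Gamma(x)}$ survives restriction to $\Gamma(x)$.
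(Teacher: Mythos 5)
Your proposal is correct and is essentially the paper's own proof: the paper likewise observes that $Q_y^{\Gamma(x)}$ is a nontrivial normal ($p$-)subgroup of the point stabiliser $G_{xy}^{\Gamma(x)}$ and then invokes Lemma~\ref{lem: fit(H) and K coprime orders}. You have simply made explicit the routine verifications (that $Q_y \leqslant G_{xy}$, that normality of $Q_y$ in $G_y$ descends to the induced action, and that nontriviality is Lemma~\ref{firstlem}(i)) which the paper leaves to the reader.
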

\begin{proof}
We have  $Q_y^{\Gamma(x)}$ is a nontrivial normal subgroup of $G_{xy}^{\Gamma(x)}$ so the lemma follows from
Lemma \ref{lem: fit(H) and K coprime orders}. 
\end{proof}

\begin{lem}
\label{opgx=opgx1} We have $\op{p}{G_x}=Q_x$, $\cent{G_x}{Q_x}^{\Gamma(x)}$ is intransitive and 
$$\cent{G_x}{Q_x} = \zent{Q_x} \op{p'}{G_x}.$$ \end{lem}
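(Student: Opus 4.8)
The plan is to establish the three assertions in turn, relying throughout on the standard fact that $\cent{H}{\fst{H}} \leqslant \fst{H}$ and on Lemma~\ref{lem:th-w}, which tells us that $\fst{\vstx{1}}$ is a $p$-group and hence equals $\op{p}{\vstx{1}} = Q_x$. First I would prove $\op{p}{G_x}=Q_x$. Since $Q_x$ is characteristic in $\vstx{1}\trianglelefteq G_x$ it is a normal $p$-subgroup of $G_x$, so $Q_x \leqslant \op{p}{G_x}$. For the reverse inclusion, note that the image of $\op{p}{G_x}$ in $G_x^{\Gamma(x)}$ is a normal $p$-subgroup, hence lies in $\op{p}{G_x^{\Gamma(x)}}$; but $\op{p}{G_x^{\Gamma(x)}}$ is a normal nilpotent subgroup of the semiprimitive group $G_x^{\Gamma(x)}$, so by Lemma~\ref{lem: fst(g) eq f(k)} it is contained in the regular normal nilpotent subgroup $R^{\Gamma(x)}$, whose order is coprime to $p$ by Lemma~\ref{nilp reg nml coprime to p}; being a $p$-group it is therefore trivial. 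Thus $\op{p}{G_x}\leqslant \vstx{1}$, whence $\op{p}{G_x}\leqslant \op{p}{\vstx{1}}=Q_x$. Writing $C=\cent{G_x}{Q_x}\trianglelefteq G_x$, I would next record the key identity $C\cap\vstx{1}=\zent{Q_x}$: indeed $C\cap\vstx{1}=\cent{\vstx{1}}{\fst{\vstx{1}}}\leqslant \fst{\vstx{1}}=Q_x$, so it equals $\cent{Q_x}{Q_x}=\zent{Q_x}$.

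The crux of the proof, and the step I expect to be the main obstacle, is showing that $C^{\Gamma(x)}$ is intransitive. Since $C\trianglelefteq G_x$, the group $C^{\Gamma(x)}$ is normal in the semiprimitive group $G_x^{\Gamma(x)}$, so it suffices to rule out transitivity. Suppose for contradiction that $C^{\Gamma(x)}$ is transitive. By arc-transitivity there is an element carrying the arc $(x,y)$ to $(y,x)$; conjugation by it sends $Q_x$ to $Q_y$ and $C$ to $C_{G_y}(Q_y)$, so the latter acts transitively on $\Gamma(y)$. I would then apply Lemma~\ref{lem:fundlem}(a) to the subgroup $K=Q_x\cap Q_y$. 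This $K$ lies in $G_x\cap G_y$ and is nontrivial because $\vst{xy}{1}\leqslant Q_x$ and $\vst{xy}{1}\leqslant Q_y$ by Lemma~\ref{lem:th-w}, while $\vst{xy}{1}\neq 1$ by hypothesis. Crucially, $C$ centralises $Q_x\supseteq K$ and $C_{G_y}(Q_y)$ centralises $Q_y\supseteq K$, so $C\leqslant \norm{G_x}{K}$ and $C_{G_y}(Q_y)\leqslant \norm{G_y}{K}$; hence both $\norm{G_x}{K}^{\Gamma(x)}$ and $\norm{G_y}{K}^{\Gamma(y)}$ are transitive. Lemma~\ref{lem:fundlem}(a) then forces $K=1$, a contradiction. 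The delicate point here is choosing $K=Q_x\cap Q_y$ rather than, say, $\zent{Q_x}$: it is the only natural nontrivial subgroup of the edge stabiliser that is simultaneously normalised by the centraliser of $Q_x$ and by the centraliser of $Q_y$, which is exactly what feeds both hypotheses of the fundamental lemma.

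With intransitivity in hand, semiprimitivity gives that $C^{\Gamma(x)}$ is semiregular, so Theorem~\ref{thm:existence of kernels} places it inside $R^{\Gamma(x)}$; by Lemma~\ref{nilp reg nml coprime to p} its order is coprime to $p$, so $C^{\Gamma(x)}$ is a $p'$-group. Combining this with $C\cap\vstx{1}=\zent{Q_x}$ shows that $C/\zent{Q_x}\cong C^{\Gamma(x)}$ has order coprime to $p$, so $\zent{Q_x}$ is a Sylow $p$-subgroup of $C$. Since $C$ centralises $Q_x\supseteq\zent{Q_x}$ we have $\zent{Q_x}\leqslant \zent{C}$, so this Sylow $p$-subgroup is central and $C$ has a normal $p$-complement, giving $C=\zent{Q_x}\times \op{p'}{C}$. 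Finally I would identify $\op{p'}{C}=\op{p'}{G_x}$: as $C\trianglelefteq G_x$ the characteristic subgroup $\op{p'}{C}$ is normal in $G_x$ and hence lies in $\op{p'}{G_x}$, while $\op{p'}{G_x}$ centralises $\op{p}{G_x}=Q_x$ (their commutator lies in $\op{p'}{G_x}\cap \op{p}{G_x}=1$) and so is a normal $p'$-subgroup of $C$, giving the reverse inclusion. Therefore $\cent{G_x}{Q_x}=\zent{Q_x}\,\op{p'}{G_x}$, completing the proof.
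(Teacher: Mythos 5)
Your proposal is correct and takes essentially the same route as the paper: the same argument that $\op{p}{G_x}^{\Gamma(x)}$ must be trivial, the same application of the fundamental lemma to the nontrivial subgroup $Q_x \cap Q_y \geqslant \vst{xy}{1}$ to force intransitivity of $\cent{G_x}{Q_x}^{\Gamma(x)}$, and the same coprimality argument (via Theorem~\ref{thm:existence of kernels} and Lemma~\ref{nilp reg nml coprime to p}) identifying $\zent{Q_x}$ as a central Sylow $p$-subgroup of the centraliser. The only cosmetic differences are that you invoke Lemma~\ref{lem:fundlem}(a) together with an explicit arc-reversing conjugation where the paper cites part (b), and you finish by identifying $\op{p'}{\cent{G_x}{Q_x}} = \op{p'}{G_x}$ via a normal $p$-complement where the paper appeals to Schur--Zassenhaus.
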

\begin{proof}
We see that $\op{p}{G_x}^{\Gamma(x)}$ is a nilpotent normal subgroup of $G_x^{\Gamma(x)}$ so Lemma~\ref{nilp
reg nml coprime to p} and Lemma~\ref{lem: fst(g) eq f(k)} show that $\op{p}{G_x}^{\Gamma(x)}=1$. This gives
$\op{p}{G_x} \leqslant \vstx{1}$ from which the first part of the lemma follows. For the third part, we
just need to show that $\cent{G_x}{Q_x} \leqslant \zent{Q_x}\op{p'}{G_x}$ since the reverse inclusion is
obvious. 

Since $\vst{xy}{1}$ is nontrivial and is contained in $Q_x \cap Q_y$, it follows from Lemma~\ref{lem:fundlem} (b) that 
$\cent{G_x}{Q_x}^{\Gamma(x)}$ is an intransitive normal  subgroup of $G_x^{\Gamma(x)}$. Theorem~\ref{thm:existence of kernels} implies that
$\cent{G_x}{Q_x}^{\Gamma(x)} \leqslant R^{\Gamma(x)}$ and Lemma~\ref{nilp reg nml coprime to p} shows that a
Sylow $p$-subgroup $P$ of $\cent{G_x}{Q_x}$ is contained in $\vstx{1}$, whence $P \leqslant
\cent{\vstx{1}}{Q_x} = \zent{Q_x}$ (by Lemma~\ref{lem:th-w}). Now we see that $|\cent{G_x}{Q_x}  :
\zent{Q_x} | $ is coprime to $p$, so the Schur-Zassenhaus Theorem gives  $D \leqslant \cent{G_x}{Q_x}$ such
that $\cent{G_x}{Q_x} = \zent{Q_x} D \cong \zent{Q_x} \times D$. Now $D=\op{p'}{\cent{G_x}{Q_x}} \leqslant
\op{p'}{G_x}$ and we are done.
\end{proof}

\begin{lem}
\label{lem:lx/qx centre}
We have $[L_x,\vstx{1}] \leqslant Q_x$, in particular, $ L_x \cap \vstx{1}/Q_x \leqslant \zent{L_x / Q_x}$.
\end{lem}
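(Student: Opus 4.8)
The plan is to show that $L_x$ normalises $Q_x$, from which the commutator containment follows immediately. Since $Q_x = \op{p}{\vst{x}{1}}$ and $\vstx{1}$ is normal in $G_x$, the subgroup $Q_x$ is characteristic in $\vstx{1}$ and hence normal in $G_x$. Because $L_x \leqslant G_x$, we therefore have $[L_x, Q_x] \leqslant Q_x$ trivially; but this is not quite what is wanted. Instead I would argue that $\vstx{1}$ normalises $L_x$ (indeed $\vstx{1} \leqslant G_x$ normalises the normal subgroup $L_x$ of $G_x$) and analyse the action the other way: the key is to produce $[L_x, \vstx{1}] \leqslant Q_x$.

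\smallskip

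First I would recall that, by Lemma~\ref{opgx=opgx1}, $\op{p}{G_x} = Q_x$, so $Q_x$ is the largest normal $p$-subgroup of $G_x$ and in particular $Q_x \vartriangleleft G_x$. Since $L_x \vartriangleleft G_x$ (established in Lemma~\ref{firstlem}) and $\vstx{1} \vartriangleleft G_x$, the commutator $[L_x, \vstx{1}]$ is a normal subgroup of $G_x$ contained in both $L_x$ and $\vstx{1}$. The heart of the matter is to see that this commutator is a $p$-group: for then, being a normal $p$-subgroup of $G_x$, it lies inside $\op{p}{G_x} = Q_x$ by the maximality just recalled. To get the $p$-group property I would use Lemma~\ref{lem:th-w}, which gives $\fst{\vstx{1}}$ a $p$-group, together with the definition $L_x = \langle (Q_xQ_y)^{G_x}\rangle$. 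One examines how each conjugate $(Q_xQ_y)^g$ interacts with $\vstx{1}$: since $Q_x \leqslant \vstx{1}$ and $\vst{xy}{1} \leqslant Q_x \cap Q_y$, the commutators $[Q_x, \vstx{1}]$ and $[Q_y^g, \vstx{1}]$ should land in a normal $p$-subgroup, and $[L_x, \vstx{1}]$ is generated by such pieces.

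\smallskip

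Once $[L_x, \vstx{1}] \leqslant Q_x$ is in hand, the ``in particular'' clause is a short quotient computation. Passing to $\ba{G_x} = G_x / Q_x$, the containment says $[\ba{L_x}, \ba{\vstx{1}}] = 1$, so $\ba{L_x}$ centralises $\ba{\vstx{1}}$, and in particular $\ba{\vstx{1}} \cap \ba{L_x} = \overline{L_x \cap \vstx{1}}$ is centralised by $\ba{L_x}$, i.e. $L_x \cap \vstx{1} / Q_x \leqslant \zent{L_x/Q_x}$.

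\smallskip

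I expect the main obstacle to be verifying that $[L_x, \vstx{1}]$ is a $p$-group, or equivalently controlling the conjugates $(Q_xQ_y)^g$ that generate $L_x$: one must ensure that the interaction of these conjugates with $\vstx{1}$ stays within the $p$-local structure, which is precisely where Lemma~\ref{lem:th-w} (giving $\fst{G_{xy}}$ and $\fst{\vstx{1}}$ as $p$-groups) and the containment $\vst{xy}{1} \leqslant \op{p}{\vst{x}{1}}$ must be deployed carefully. The clean alternative, if a direct commutator estimate is awkward, is to appeal to $\op{p}{G_x} = Q_x$ together with coprime action: since $R^{\Gamma(x)}$ has order coprime to $p$ by Lemma~\ref{nilp reg nml coprime to p}, a coprime-action argument on $\vstx{1}/Q_x$ may show the relevant $p'$-part acts trivially, forcing the commutator into $Q_x$.
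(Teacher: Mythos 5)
Your reduction is valid as far as it goes: $[L_x,\vstx{1}]$ is indeed normal in $G_x$ (being the commutator of two normal subgroups), so \emph{if} it were known to be a $p$-group it would lie in $\op{p}{G_x}=Q_x$ by Lemma~\ref{opgx=opgx1}. But this defers the entire content of the lemma to the claim that $[L_x,\vstx{1}]$ is a $p$-group, and your proposal never proves that claim --- you yourself flag it as ``the main obstacle''. The obvious containment $[L_x,\vstx{1}]\leqslant L_x\cap\vstx{1}$ does not help, because $L_x\cap\vstx{1}$ is in general not a $p$-group: the paper's later Lemma~\ref{lx n gx1 : qx coprime to p} and Lemma~\ref{properties of lx}(iv) are precisely about controlling the $p'$-part of $L_x\cap\vstx{1}/Q_x$. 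Moreover, the tools you propose for closing the gap are not the operative ones: neither Lemma~\ref{lem:th-w} (that $\fst{\vstx{1}}$ and $\fst{G_{xy}}$ are $p$-groups) nor the containment $\vst{xy}{1}\leqslant Q_x\cap Q_y$ enters the argument, and the closing suggestion about coprime action is a hope, not a proof sketch.

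The observation you are missing is purely local and elementary. Since $\vstx{1}$ fixes $y\in\Gamma(x)$, we have $\vstx{1}\leqslant G_y$, so $\vstx{1}$ normalises $\vsty{1}$ and hence also its characteristic subgroup $Q_y=\op{p}{\vsty{1}}$. Therefore $[Q_y,\vstx{1}]\leqslant Q_y\cap\vstx{1}$, where the containment in $\vstx{1}$ holds because $\vstx{1}\vartriangleleft G_x$ and $Q_y\leqslant G_{xy}$. Now $Q_y\cap\vstx{1}$ is a $p$-subgroup of $\vstx{1}$ normalised by $\vstx{1}$, so $Q_y\cap\vstx{1}\leqslant\op{p}{\vstx{1}}=Q_x$ --- note this uses only the \emph{definition} of $Q_x$ as $\op{p}{\vstx{1}}$, not Lemma~\ref{opgx=opgx1}. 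Combining this with $[Q_x,\vstx{1}]\leqslant Q_x$ and the fact that $Q_y$ normalises $Q_x$ gives $[Q_xQ_y,\vstx{1}]\leqslant Q_x$; conjugating by $g\in G_x$ (both $\vstx{1}$ and $Q_x$ are normal in $G_x$) gives $[(Q_xQ_y)^g,\vstx{1}]\leqslant Q_x$ for every $g$, so the image in $G_x/Q_x$ of every generator of $L_x$ centralises the image of $\vstx{1}$, i.e.\ $[L_x,\vstx{1}]\leqslant Q_x$. This is exactly the paper's argument; once it is in place, your concluding ``in particular'' computation is correct.
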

\begin{proof}
Since $\vstx{1} \leqslant G_y$ we see that $\vstx{1}$ normalises $Q_y$. Thus  
$$[L_x,\vstx{1} ] = [ \langle (Q_x Q_y) ^{G_x} \rangle, \vstx{1}] = \langle [ Q_xQ_y,\vstx{1}]^{G_x}
\rangle.$$
 Now $Q_x$ and $Q_y$ normalise each other and $[\vstx{1},Q_y] \leqslant \vstx{1} \cap Q_y \leqslant Q_x$, so
 $$[Q_xQ_y,\vstx{1}] \leqslant Q_x.$$
This shows that $L_x \cap \vstx{1}/Q_x$ is contained in the centre of $L_x/Q_x$.
\end{proof}

\begin{lem}
\label{lx n gx1 : qx coprime to p}
The group $Q_x$ is the Sylow $p$-subgroup of $R$.
 \end{lem}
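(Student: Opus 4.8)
The plan is to compute a Sylow $p$-subgroup of $R$ by working inside the kernel of the action of $R$ on $\Gamma(x)$, and then to pin down that kernel's Sylow subgroup as $Q_x$ using the centrality statement already available. First I would record the relevant inclusions: since $Q_x = \op{p}{\vstx{1}} \leqslant \vstx{1}$ and $Q_x \leqslant Q_xQ_y \leqslant L_x$, we have $Q_x \leqslant L_x \cap \vstx{1} \leqslant R$, and $Q_x$ is a \emph{normal} $p$-subgroup of $R$ because $Q_x = \op{p}{G_x} \trianglelefteq G_x$ by Lemma~\ref{opgx=opgx1}. Next, because $R \leqslant L_x$, the kernel of the natural map $R \to R^{\Gamma(x)}$ is exactly $R \cap \vstx{1} = L_x \cap \vstx{1}$, and Lemma~\ref{nilp reg nml coprime to p} tells us that $R^{\Gamma(x)}$ has order coprime to $p$. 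Hence $|R : L_x \cap \vstx{1}|$ is coprime to $p$, and it suffices to prove that $Q_x$ is a Sylow $p$-subgroup of $L_x \cap \vstx{1}$.

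The heart of the argument is the claim that $L_x \cap \vstx{1}$ has $Q_x$ as its Sylow $p$-subgroup. By Lemma~\ref{lem:lx/qx centre} the quotient $(L_x \cap \vstx{1})/Q_x$ lies in $\zent{L_x/Q_x}$, so it is abelian and therefore has a unique Sylow $p$-subgroup; let $P$ be its full preimage in $L_x \cap \vstx{1}$. Then $P$ is a normal, hence characteristic, Sylow $p$-subgroup of $L_x \cap \vstx{1}$. Since $L_x \trianglelefteq G_x$ and $\vstx{1} \trianglelefteq G_x$, we have $L_x \cap \vstx{1} \trianglelefteq G_x$, and therefore $P \trianglelefteq G_x$. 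Now $P$ is a normal $p$-subgroup of $G_x$, so $P \leqslant \op{p}{G_x} = Q_x$ by Lemma~\ref{opgx=opgx1}; together with $Q_x \leqslant P$ this forces $P = Q_x$. Finally, since $Q_x$ is a Sylow $p$-subgroup of the normal subgroup $L_x \cap \vstx{1}$, which has index coprime to $p$ in $R$, it is a Sylow $p$-subgroup of $R$, and being normal in $R$ it is the unique one.

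The step I expect to be the main obstacle is the middle one: the work is in realizing that the centrality conclusion of Lemma~\ref{lem:lx/qx centre} upgrades the relevant Sylow subgroup of $L_x \cap \vstx{1}$ to a \emph{characteristic} (so $G_x$-normal) subgroup, after which the identification $\op{p}{G_x} = Q_x$ from Lemma~\ref{opgx=opgx1} closes the loop. The surrounding parts are routine bookkeeping, relying only on the standard fact that a Sylow $p$-subgroup of a normal subgroup of $p$-coprime index is a Sylow $p$-subgroup of the ambient group.
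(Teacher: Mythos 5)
Your proof is correct and takes essentially the same route as the paper: both reduce via the factorisation $|R:Q_x| = |R:L_x \cap \vstx{1}|\,|L_x \cap \vstx{1}:Q_x|$, handling the first factor with Lemma~\ref{nilp reg nml coprime to p} and the second with the centrality statement of Lemma~\ref{lem:lx/qx centre}, which makes $(L_x \cap \vstx{1})/Q_x$ abelian so that the relevant Sylow $p$-subgroup is normal. The only (cosmetic) difference is the final identification: the paper notes directly that $\op{p}{L_x \cap \vstx{1}} \leqslant \op{p}{\vstx{1}} = Q_x$, whereas you pass through the characteristic-in-normal argument and $\op{p}{G_x} = Q_x$ from Lemma~\ref{opgx=opgx1}.
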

\begin{proof}
By Lemma~\ref{lem:lx/qx centre} we see that $L_x \cap \vstx{1} / Q_x$ is  abelian. Since $\op{p}{L_x \cap
\vstx{1}} \leqslant \op{p}{\vstx{1}} = Q_x$, we see that $p$ does not divide $|L_x \cap
\vstx{1}:Q_x|$. Since 
$$|R:Q_x| = |R:L_x \cap \vstx{1}||L_x \cap \vstx{1} :Q _x|$$ the result follows from Lemma~\ref{nilp reg nml
coprime to p}.
\end{proof}

\begin{lem}
\label{qx a sylow of clxzx}
We have $Z_x \leqslant \Omega\zent{Q_x}$ and $Q_x$ is the Sylow $p$-subgroup of $\cent{L_x}{Z_x}$.
\end{lem}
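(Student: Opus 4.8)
The plan is to establish the two assertions separately: the first is a short centraliser computation, while the second reduces, via Lemma~\ref{lx n gx1 : qx coprime to p}, to showing that the normal subgroup $\cent{L_x}{Z_x}$ induces a $p'$-group on $\Gamma(x)$, and it is here that Lemma~\ref{lem:fundlem} does the real work.

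First I would prove $Z_x\leqslant\Omega\zent{Q_x}$. Since $Q_x$ and $Q_y$ normalise one another, $Q_xQ_y$ is a $p$-group, so $Z_{xy}=\Omega\zent{Q_xQ_y}$ is a $p$-group generated by elements of order $p$ lying in $\zent{Q_xQ_y}$; in particular $Z_{xy}$ centralises $Q_x$, that is, $Z_{xy}\leqslant\cent{G_x}{Q_x}$. By Lemma~\ref{opgx=opgx1} we have $\cent{G_x}{Q_x}=\zent{Q_x}\op{p'}{G_x}$, so $\zent{Q_x}$ is the unique Sylow $p$-subgroup of $\cent{G_x}{Q_x}$ and therefore contains the $p$-group $Z_{xy}$. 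As $Z_{xy}$ is generated by elements of order $p$, this forces $Z_{xy}\leqslant\Omega\zent{Q_x}$. Now $Q_x=\op{p}{G_x}$ is normal in $G_x$ (Lemma~\ref{opgx=opgx1}), hence so is $\Omega\zent{Q_x}$, and the normal closure $Z_x=\grp{Z_{xy}^{G_x}}$ lies inside it.

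For the second assertion write $N=\cent{L_x}{Z_x}$. The first part gives $Z_x\leqslant\zent{Q_x}$, so $Q_x$ centralises $Z_x$ and, as $Q_x\leqslant L_x$, we obtain $Q_x\leqslant N$; moreover $N=L_x\cap\cent{G_x}{Z_x}$ is normal in $G_x$ because $L_x$ and $Z_x$ are. The proof of Lemma~\ref{lx n gx1 : qx coprime to p} shows that $Q_x$ is a Sylow $p$-subgroup of $L_x\cap\vstx{1}$, and since $Q_x\leqslant N\cap\vstx{1}\leqslant L_x\cap\vstx{1}$ it is then a Sylow $p$-subgroup of $N\cap\vstx{1}$ as well. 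Hence, once I know that $N^{\Gamma(x)}\cong N/(N\cap\vstx{1})$ has order coprime to $p$, it follows that $|N|_p=|N\cap\vstx{1}|_p=|Q_x|$; as $Q_x\trianglelefteq N$ this yields that $Q_x$ is the Sylow $p$-subgroup of $N$, as required.

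The crux, and the step I expect to be the main obstacle, is to show that $N^{\Gamma(x)}$ is intransitive; for then it is semiregular and so, by Theorem~\ref{thm:existence of kernels} together with Lemma~\ref{nilp reg nml coprime to p}, contained in the $p'$-group $R^{\Gamma(x)}$. I would argue by contradiction via Lemma~\ref{lem:fundlem}(b) applied to $K=Z_{xy}$. Since $N$ centralises $Z_x\supseteq Z_{xy}$ we have $N\leqslant\norm{G_x}{Z_{xy}}$, so if $N^{\Gamma(x)}$ were transitive then $\norm{G_x}{Z_{xy}}^{\Gamma(x)}$ would be transitive too. The decisive point is that $Z_{xy}=\Omega\zent{Q_xQ_y}$ is symmetric in $x$ and $y$: an arc-reversing element $g\in G_e$ (which exists because $G$ is arc-transitive) conjugates $Q_x$ to $Q_y$ and $Q_y$ to $Q_x$, hence normalises $Q_xQ_y$ and therefore $Z_{xy}$, so $g\in\norm{G_e}{Z_{xy}}\setminus G_{xy}$. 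Thus $\norm{G_e}{Z_{xy}}\not\leqslant G_x\cap G_y$, while $Z_{xy}\leqslant Q_xQ_y\leqslant G_x\cap G_y$ is nontrivial (as $Q_xQ_y$ is a nontrivial $p$-group, containing $1\neq\vst{xy}{1}$ by Lemma~\ref{lem:th-w} and the standing hypothesis). Lemma~\ref{lem:fundlem}(b) would then give $Z_{xy}=1$, a contradiction; hence $N^{\Gamma(x)}$ is intransitive and the proof concludes as above.
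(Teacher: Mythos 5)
Your proposal is correct and follows essentially the same route as the paper: the first part via $Z_{xy}\leqslant\cent{G_x}{Q_x}$ and Lemma~\ref{opgx=opgx1}, and the second part by showing $\cent{L_x}{Z_x}^{\Gamma(x)}$ is intransitive (hence semiregular, hence inside the $p'$-group $R^{\Gamma(x)}$) and then invoking the coprimality from Lemma~\ref{lx n gx1 : qx coprime to p}. The only difference is that you spell out the application of Lemma~\ref{lem:fundlem}(b) with $K=Z_{xy}$ and the symmetry of $Z_{xy}$ under arc-reversal, which the paper leaves implicit in the sentence asserting semiregularity.
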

\begin{proof}
Note that $[Z_{xy},Q_x] \leqslant [Z_{xy},Q_xQ_y] = 1$, so $Z_{xy} \leqslant \cent{L_x}{Q_x}$. In
particular, $Z_{xy}$ is contained in a Sylow $p$-subgroup of $\cent{G_x}{Q_x}$, which by
Lemma~\ref{opgx=opgx1} is equal to $\zent{Q_x}$. Since $Z_{xy}$ is elementary abelian, we have $Z_{xy}
\leqslant \Omega\zent{Q_x}$ and from this it follows that $Z_x \leqslant \Omega\zent{Q_x}$.

For the second part we have that $Q_x
\leqslant \cent{L_x}{Z_x}$. If $\cent{L_x}{Z_x} \leqslant L_x \cap \vstx{1}$ then we are done by
Lemma~\ref{lx n gx1 : qx coprime to p}. Otherwise, we
see that $\cent{L_x}{Z_x}$ is a normal subgroup of $G_x$ which is not contained in $\vstx{1}$. Since $1\neq
Z_{xy}$ is centralised by $\cent{L_x}{Z_x}$ we have that $\cent{L_x}{Z_x}$ is semiregular on $\Gamma(x)$.
Then $\cent{L_x}{Z_x}( L_x \cap \vstx{1}) \leqslant R$ and so the result follows from Lemma \ref{lx n gx1 :
qx coprime to p}.
\end{proof}

Recall that a group $X$ is $p$-separable if the series 
$$1 \leqslant \mathrm O_p(X) \leqslant \mathrm O_{p p'}(X) \leqslant \mathrm O_{pp'p} (X) \leqslant \cdots$$
 terminates with $X$. Here the group $\mathrm O_{pp'}(X)$ is defined by
$$\mathrm O_{pp'}(X) / \mathrm O_p(X) = \mathrm O_{p'}(X/\mathrm O_p(X))$$
and the other groups in the series are defined in the same recursive manner.
A soluble group is $p$-separable for all primes $p$.

\begin{lem}
\label{properties of lx}
The following hold:
\begin{itemize}
\item[(i)] $L_x = R Q_y$;
\item[(ii)]  $Q_xQ_y \in \syl{p}{L_x}$;
\item[(iii)]  $L_x$ is $p$-separable;
\item[(iv)]  if $r$ is a prime dividing $|L_x \cap \vstx{1} : Q_x|$ then $r$ divides $|R:R \cap \vstx{1}|$.
\end{itemize}
\end{lem}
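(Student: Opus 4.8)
The plan is to treat the four parts in order, leaning on the facts already assembled about $Q_x$, $R$ and $L_x$. For (i), the key observation is that $Q_x \leqslant L_x \cap \vstx{1} \leqslant R$, so it suffices to absorb the generating conjugates of $Q_y$ into $RQ_y$. Since $R^{\Gamma(x)}$ is the regular normal subgroup of $G_x^{\Gamma(x)}$, the group $R$ is transitive on $\Gamma(x)$; so given $g \in G_x$ I can pick $r \in R$ with $y^r = y^g$, whence $h := gr^{-1} \in G_{xy}$ and $Q_y^g = (Q_y^h)^r = Q_y^r$ because $h \in G_y$ normalises $Q_y \trianglelefteq G_y$. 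As $R \trianglelefteq G_x$, the set $RQ_y$ is a subgroup containing $Q_x$, $Q_y$ and every $Q_y^r$, so $L_x = \grp{Q_x, Q_y^g : g \in G_x} \leqslant RQ_y$, and the reverse inclusion is clear. This gives $L_x = RQ_y$.

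For (ii) I use that $Q_x$ is the (normal, hence unique) Sylow $p$-subgroup of $R$ by Lemma~\ref{lx n gx1 : qx coprime to p}, so every $p$-subgroup of $R$ lies in $Q_x$; in particular $R \cap Q_y \leqslant Q_x$, giving $R \cap Q_y = Q_x \cap Q_y$. Then $Q_xQ_y$ is a $p$-subgroup of $L_x$ (the two factors normalise each other, being normal in $G_{xy}$), and $|L_x|_p = |R|_p|Q_y|/|R\cap Q_y| = |Q_x||Q_y|/|Q_x\cap Q_y| = |Q_xQ_y|$, so $Q_xQ_y \in \syl{p}{L_x}$. For (iii), Lemma~\ref{lx n gx1 : qx coprime to p} shows $R/Q_x$ is a $p'$-group with $\op{p}{R}=Q_x$, so $R$ is $p$-separable; since $L_x = RQ_y$ by (i), $L_x/R \cong Q_y/(Q_y \cap R)$ is a $p$-group, and an extension of a $p$-separable group by a $p$-group is $p$-separable. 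I also record here the facts I will reuse in (iv): $\op{p}{L_x}=Q_x$ (any normal $p$-subgroup of $L_x$ is normal in $G_x$, hence lies in $\op{p}{G_x}=Q_x$ by Lemma~\ref{opgx=opgx1}), that $R/Q_x = \op{p'}{L_x/Q_x}$ is the normal $p'$-Hall subgroup of $L_x/Q_x$, and that $R/Q_x$ is \emph{nilpotent}: indeed $N := (R\cap\vstx{1})/Q_x = (L_x\cap\vstx{1})/Q_x \leqslant \zent{L_x/Q_x}$ by Lemma~\ref{lem:lx/qx centre}, while $(R/Q_x)/N \cong R^{\Gamma(x)}$ is nilpotent by Theorem~\ref{thm:existence of kernels}, so $R/Q_x$ is a central extension of a nilpotent group.

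For (iv), note first that $r \neq p$ by Lemma~\ref{lx n gx1 : qx coprime to p}, and that $|R:R\cap\vstx{1}| = |R^{\Gamma(x)}| = d$. Since $R/Q_x$ is nilpotent, the statement is equivalent to: every prime dividing $|R/Q_x|$ divides $d$, i.e.\ no Sylow subgroup of $R/Q_x$ is contained in the kernel $N$. Suppose for contradiction that some prime $r \nmid d$ has nontrivial Sylow $r$-subgroup $M_r$ in $R/Q_x$. Because $r \nmid |R^{\Gamma(x)}|$, the image of $M_r$ in $R^{\Gamma(x)}$ is trivial, so $M_r \leqslant N$; writing $P_r$ for the corresponding Sylow $r$-subgroup of $R$, we get $P_r \leqslant \vstx{1}$, $P_rQ_x \trianglelefteq G_x$ (as $M_r$ is characteristic in $R/Q_x \trianglelefteq G_x/Q_x$), and $M_r \leqslant \zent{L_x/Q_x}$. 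As a first step towards a contradiction I would extract that $P_r$ acts faithfully on $Q_x$: since $\fst{\vstx{1}}$ is a $p$-group by Lemma~\ref{lem:th-w}, $\op{r}{\vstx{1}}=1$, and then $\op{r}{P_rQ_x}=1$ forces $\cent{P_r}{Q_x}=1$ via $\cent{P_rQ_x}{Q_x}=\zent{Q_x}\times\cent{P_r}{Q_x}$.

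The hard part will be closing out (iv): the naive route through Lemma~\ref{lem:fundlem} fails, because no edge-reversing element can normalise $P_r$ (its conjugate lies in $\vsty{1}$, so equality would force $P_r \leqslant \vst{xy}{1}$, a $p$-group), and transitivity of $\norm{G_y}{P_r}^{\Gamma(y)}$ is unclear since $P_r \not\leqslant R_y$. The contradiction must therefore come from a finer analysis at the neighbour $y$: one examines the nontrivial normal subgroup $(P_rQ_x)^{\Gamma(y)}$ of the point stabiliser $G_{xy}^{\Gamma(y)}$ in the semiprimitive group $G_y^{\Gamma(y)}$ (its order is divisible by $r$, yet it meets the regular normal subgroup $R_y^{\Gamma(y)}$ trivially), and combines the faithful action of $P_r$ on $Q_x$ with the semiprimitivity criterion Theorem~\ref{thm:spcriterion} applied on the $y$-side to force $r \mid d$, contrary to assumption. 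I expect this interplay between the two vertices --- reconciling the centrality of $M_r$ in $L_x/Q_x$ with the regular nilpotent action at $y$ --- to be the crux of the argument.
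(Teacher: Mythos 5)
Parts (i)--(iii) of your proof are correct and essentially identical to the paper's: transitivity of $R$ on $\Gamma(x)$ gives $\{Q_y^g : g \in G_x\} = \{Q_y^r : r \in R\}$ and hence $L_x = RQ_y$; the equality $R \cap Q_y = Q_x \cap Q_y$ from Lemma~\ref{lx n gx1 : qx coprime to p} gives (ii); and the normal series $1 \leqslant Q_x \leqslant R \leqslant L_x$, whose factors are a $p$-group, a $p'$-group and a $p$-group, gives (iii). Your auxiliary observations (nilpotency of $R/Q_x$, faithfulness of $P_r$ on $Q_x$) are correct but turn out to be unnecessary.

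Part (iv) is where your proposal has a genuine gap, and you say so yourself: you reduce correctly to the situation where $M_r = P_rQ_x/Q_x$ is a nontrivial Sylow $r$-subgroup of $R/Q_x$ contained in $(L_x \cap \vstx{1})/Q_x$, hence central in $\ba{L_x} := L_x/Q_x$ by Lemma~\ref{lem:lx/qx centre}, but you never derive a contradiction; your final paragraph is a hope, not an argument. The missing step is short and purely group-theoretic, and it is exactly where the paper exploits the definition of $L_x$ as the normal closure of a $p$-group. Since $|L_x : R|$ is a power of $p$ by (i), the assumption $r \nmid |R : R \cap \vstx{1}|$ makes $M_r$ a central Sylow $r$-subgroup of all of $\ba{L_x}$; Burnside's normal complement theorem (applied to the prime $r$) then provides a normal complement $\ba{F}$, so that $\ba{L_x}/\ba{F}$ is an $r$-group. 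Since $r \neq p$, the $p$-group $\ba{Q_xQ_y}$ maps trivially into $\ba{L_x}/\ba{F}$, whence $\ba{Q_xQ_y} \leqslant \ba{F}$, and normality of $\ba{F}$ gives $\ba{L_x} = \grp{\ba{Q_xQ_y}^{\ba{L_x}}} \leqslant \ba{F}$, forcing $M_r \leqslant M_r \cap \ba{F} = 1$, a contradiction. In other words, $L_x$ is generated by conjugates of the $p$-group $Q_xQ_y$, so it has no nontrivial quotient that is an $r$-group for $r \neq p$; that is what kills the central Sylow $r$-subgroup. Your instinct that the contradiction must come from a finer analysis at the neighbour $y$ points in the wrong direction, and the sketch you give is unlikely to be repairable as stated: $(P_rQ_x)^{\Gamma(y)}$ is normal in the point stabiliser $G_{xy}^{\Gamma(y)}$ but not in $G_y^{\Gamma(y)}$, so semiprimitivity of $G_y^{\Gamma(y)}$ gives no direct control over it, and to invoke Theorem~\ref{thm:spcriterion} on the $y$-side you would need to exhibit a nontrivial subgroup of $G_{xy}^{\Gamma(y)}$ acting trivially on an invariant quotient of the regular normal subgroup, which you have not produced.
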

\begin{proof}
Since $R$ is transitive on $\Gamma(x)$ we have that $\{ Q_y ^{G_x} \} = \{Q_y ^{R} \}$. Whence 
$$L_x = \langle (Q_xQ_y)^{G_x} \rangle  = \langle (Q_xQ_y)^R \rangle  \leqslant RQ_x Q_y = RQ_y$$
and since $RQ_y \leqslant L_x$ we have equality so (i) holds.
By Lemma~\ref{lx n gx1 : qx coprime to p} we have $R \cap Q_y = Q_x \cap Q_y$ and it follows that  $Q_xQ_y
\in \syl{p}{L_x}$ which is (ii).
 We observe that  $ Q_x = \op{p}{L_x}$, $R=\op{pp'}{L_x}$ and $L_x =
\op{pp'p}{L_x}$ which gives (iii).

Finally  suppose that $r$ is a prime dividing $|L_x \cap \vstx{1} : Q_x|$ and let $E$ be a Sylow
$r$-subgroup of $L_x \cap \vstx{1}$.
Let $\ba{L_x}  = L_x/ Q_x$, then $\ba{E}$ is a nontrivial central subgroup of $\ba{L_x}$ by Lemma
\ref{lem:lx/qx centre}. If $r$ does not divide $|R: L_x \cap \vstx{1}|$, then by part (1) $\ba{E}$ is a Sylow
$r$-subgroup of $\ba{L_x}$, and so there is a normal complement $\ba{F}$ to $\ba{E}$ in $\ba{L_x}$ by 
Burnside's Normal $p$-Complement Theorem. We have  $\ba{Q_xQ_y} \leqslant \ba{F}$ and therefore $\ba{L_x} = \langle
\ba{Q_xQ_y}^{\ba{L_x}} \rangle \leqslant \ba{F}$.
Now $\ba{E} \leqslant \ba{E} \cap \ba{F} = 1$, a contradiction.
\end{proof}

The next lemma implies that $\op{p}{L_x/\op{p'}{L_x}}= \op{p}{L_x} \op{p'}{L_x}/\op{p'}{L_x}$. This is not
standard behaviour for $p$-separable groups, indeed, the group $X:=\sym{3} \times \cyc{2}$ is $2$-separable,
but $$\op{2}{X/\op{3}{X}}=X/\op{3}{X} \neq \op{2}{X}\op{3}{X}/\op{3}{X}.$$

\begin{lem}
\label{u normal in balx}
Let $\ba{L_x} = L_x / R_0$. Then  $\op{p}{\ba{L_x}} = \ba{Q_x}$.
\end{lem}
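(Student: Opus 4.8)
The plan is to identify the preimage $N\leqslant L_x$ of $\op{p}{\ba{L_x}}$, namely $N=\op{p'p}{L_x}$, and to prove $N=Q_xR_0$; since $Q_x\cap R_0=1$ this gives $\op{p}{\ba{L_x}}=N/R_0=Q_xR_0/R_0=\ba{Q_x}$, while the inclusion $\ba{Q_x}\leqslant\op{p}{\ba{L_x}}$ is clear because $Q_x=\op{p}{L_x}$ is a normal $p$-subgroup. By Lemma~\ref{properties of lx} the group $L_x$ is $p$-separable with $\op{p}{L_x}=Q_x$, $\op{pp'}{L_x}=R$ and $\op{pp'p}{L_x}=L_x$, so $N$ is a genuine term of the lower series, $R_0=\op{p'}{L_x}$ is contained in $N$, and $N/R_0$ is a $p$-group. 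Thus $N$ is $p$-nilpotent with normal $p$-complement $R_0$; writing $N=R_0\rtimes S$ with $S\in\syl{p}{N}$ and noting $Q_x\leqslant S$, the lemma reduces to showing $S=Q_x$.

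The first key observation is that $R_0$ centralises $Q_x$. Indeed $Q_x$ is a normal Sylow $p$-subgroup of $R$ by Lemma~\ref{lx n gx1 : qx coprime to p}, so $Q_x=\op{p}{R}$, and $R_0=\op{p'}{R}$ since $R_0$ is normal in $L_x$ and is therefore the largest normal $p'$-subgroup of $R$; as coprime normal subgroups, $[Q_x,R_0]\leqslant Q_x\cap R_0=1$. In particular $R_0$ centralises $Z_x$, because $Z_x\leqslant\Omega\zent{Q_x}\leqslant Q_x$ by Lemma~\ref{qx a sylow of clxzx}. Hence $R_0\leqslant\cent{L_x}{Z_x}$, and the proof of Lemma~\ref{qx a sylow of clxzx} shows that $\cent{L_x}{Z_x}^{\Gamma(x)}$ is either trivial or semiregular; either way it is intransitive, so $R_0^{\Gamma(x)}$ is intransitive on $\Gamma(x)$.

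Next I would pass to the local action. Since $N$ is characteristic in $L_x\trianglelefteq G_x$, the induced group $N^{\Gamma(x)}$ is normal in the semiprimitive group $G_x^{\Gamma(x)}$, whose regular normal nilpotent (hence soluble) subgroup is $R^{\Gamma(x)}$, of order coprime to $p$ by Lemma~\ref{nilp reg nml coprime to p}. By semiprimitivity $N^{\Gamma(x)}$ is transitive or semiregular. Suppose it were transitive. Then Theorem~\ref{thm:existence of kernels} gives $R^{\Gamma(x)}\leqslant N^{\Gamma(x)}$; but $N^{\Gamma(x)}$ is again $p$-nilpotent with normal $p$-complement $R_0^{\Gamma(x)}$, and the $p'$-subgroup $R^{\Gamma(x)}$ must lie in this complement, whence $R^{\Gamma(x)}\leqslant R_0^{\Gamma(x)}$. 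This would make $R_0^{\Gamma(x)}$ transitive, contradicting the previous paragraph. Therefore $N^{\Gamma(x)}$ is semiregular, so by Theorem~\ref{thm:existence of kernels} it is contained in the $p'$-group $R^{\Gamma(x)}$ and is itself a $p'$-group.

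Finally, a Sylow $p$-subgroup of $N^{\Gamma(x)}$ is the image $S^{\Gamma(x)}$, so $S^{\Gamma(x)}=1$, that is $S\leqslant\vst{x}{1}$. As $S\leqslant L_x$, this places $S$ inside $L_x\cap\vst{x}{1}$, whose Sylow $p$-subgroup is $Q_x$ by Lemma~\ref{lx n gx1 : qx coprime to p}; hence $S\leqslant Q_x$ and so $S=Q_x$, giving $N=Q_xR_0$ as required. The step I expect to be the crux is the combination of the reduction to $S=Q_x$ with the fact that $R_0$ lands in the intransitive centraliser $\cent{L_x}{Z_x}$: this is precisely what fails for a group such as $\sym{3}\times\cyc{2}$, where the analogue of $R_0$ is moved by the extra involution, and it is where the graph-theoretic hypothesis—through semiprimitivity of the local action and Theorem~\ref{thm:existence of kernels}—supplies the input that pure $p$-separability cannot.
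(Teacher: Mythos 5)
Your proof is correct, but the engine driving it is genuinely different from the paper's. Both arguments share the same skeleton: pass to the preimage $N$ of $\op{p}{\ba{L_x}}$, split it as $R_0$ extended by a Sylow $p$-subgroup, and reduce to showing that this Sylow $p$-subgroup lies in $\vstx{1}$, hence in $Q_x$ by Lemma~\ref{lx n gx1 : qx coprime to p}; both also need intransitivity of $R_0^{\Gamma(x)}$, which ultimately comes from $[R_0,Q_x]=1$. The divergence is at the key step. The paper chooses its Sylow $p$-subgroup $U_0$ inside $Q_xQ_y$ (via Lemma~\ref{properties of lx}), computes $[U_0,R]\leqslant U\cap R=R_0Q_x$, and concludes that $U_0^{\Gamma(x)}\leqslant G_{xy}^{\Gamma(x)}$ centralises the nontrivial quotient $R^{\Gamma(x)}/R_0^{\Gamma(x)}$, which is forbidden by the faithfulness criterion, Theorem~\ref{thm:spcriterion}. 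You instead apply the transitive-or-semiregular dichotomy to $N^{\Gamma(x)}$ itself and use Theorem~\ref{thm:existence of kernels} in both directions: transitivity of $N^{\Gamma(x)}$ would force the $p'$-group $R^{\Gamma(x)}$ (Lemma~\ref{nilp reg nml coprime to p}) into the normal $p$-complement, making $R_0^{\Gamma(x)}$ transitive, while semiregularity places $N^{\Gamma(x)}$ inside the $p'$-group $R^{\Gamma(x)}$ and so kills its $p$-part. Your route avoids the commutator computation and never mentions $Q_y$, at the cost of invoking the heavier Theorem~\ref{thm:existence of kernels} twice together with the coprimality lemma; the paper's route needs only the basic semiprimitivity criterion applied to a single quotient and is closer in spirit to the factorisation arguments used later in the paper.

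One citation should be tightened. You justify intransitivity of $R_0^{\Gamma(x)}$ by saying that the proof of Lemma~\ref{qx a sylow of clxzx} shows $\cent{L_x}{Z_x}^{\Gamma(x)}$ is trivial or semiregular, ``either way intransitive''. Semiregularity alone does not imply intransitivity (a regular subgroup is both); what saves you is that the proof of that lemma in fact rules out transitivity, via Lemma~\ref{lem:fundlem}(b) applied to $Z_{xy}$. A cleaner path, and the one the paper itself takes, is to observe that $R_0$ and $Q_x$ are normal subgroups of $G_x$ of coprime order, so $R_0\leqslant\cent{G_x}{Q_x}$, and $\cent{G_x}{Q_x}^{\Gamma(x)}$ is intransitive by the statement of Lemma~\ref{opgx=opgx1}. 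With that substitution your argument stands as a complete alternative proof.
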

\begin{proof}
Let $U \leqslant L_x$ be such that $\ba{U}=\op{p}{\ba{L_x}}$ and note that $\ba{Q_x} \leqslant \ba{U}$ so
$Q_x \leqslant U$.
 Choose $U_0$ to be a Sylow $p$-subgroup of $U$ so that $U = R_0 U_0$ and we may
assume that $U_0 \leqslant Q_xQ_y$ by Lemma \ref{properties of lx}. Note that $Q_x \leqslant U_0$ and (since
$R_0 \leqslant R$) we have
$$ U \cap R = R_0U_0 \cap R = R_0(U_0 \cap R) = R_0 Q_x,$$
where the last equality is by Lemma~\ref{lx n gx1 : qx coprime to p}.
Now $[U_0,R] \leqslant [U,R] \leqslant U \cap R = R_0Q_x < R$ where the last inequality holds since Lemma~\ref{opgx=opgx1} shows that $R_0Q_x$ is intransitive but $R$ is transitive by definition. 
 By our choice of $R$, $R^{\Gamma(x)}$ is the normal nilpotent regular subgroup of $(G_x)^{\Gamma(x)}$. Then
$$[(U_0)^{\Gamma(x)},R^{\Gamma(x)}]=[U_0,R]^{\Gamma(x)} \leqslant (Q_xR_0)^{\Gamma(x)} =  (R_0) ^{\Gamma(x)}
< R^{\Gamma(x)}.$$
The calculation above shows that $(U_0)^{\Gamma(x)}$ centralises the nontrivial quotient $R^{\Gamma(x)}/
(R_0)^{\Gamma(x)}$. On the other hand,  since $R_0$ is normal in $G_x$, $ (R_0) ^{\Gamma(x)}$  is a $
(G_{xy}) ^{\Gamma(x)}$-invariant normal subgroup of $R^{\Gamma(x)}$ and Theorem~\ref{thm:spcriterion} says
that $(G_{xy})^{\Gamma(x)}$ acts faithfully on $R^{\Gamma(x)}/ (R_0)^{\Gamma(x)}$. Hence
$(U_0)^{\Gamma(x)}=1$, that is, $U_0 \leqslant L_x \cap \vstx{1}$ and therefore
$U_0 \leqslant Q_x$. Hence $\ba{U} \leqslant \ba {Q_x}$ as required.
  \end{proof}

\begin{lem}
\label{lem: p eq 2,3 and q}
We have $p \in \{2,3\}$ and $q:=5-p$ divides $|R/L_x \cap \vstx{1}|$.
\end{lem}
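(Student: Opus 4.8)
The plan is to transfer the whole question into the quotient $\ba{L_x} = L_x/R_0$ and thence into the local group $G_x^{\Gamma(x)}$. In $\ba{L_x}$ Lemma~\ref{u normal in balx} gives $\op{p}{\ba{L_x}} = \ba{Q_x}$, and since $R_0 = \op{p'}{L_x}$ we have $\op{p'}{\ba{L_x}} = 1$; as $\ba{L_x}$ is $p$-separable (Lemma~\ref{properties of lx}), it is $p$-constrained, so $\cent{\ba{L_x}}{\ba{Q_x}} \leqslant \ba{Q_x}$. First I would record the numerical translation: because $L_x \cap \vstx{1} \leqslant R \leqslant L_x$, the kernel of $R$ acting on $\Gamma(x)$ is exactly $L_x \cap \vstx{1}$, so $R/(L_x \cap \vstx{1}) \cong R^{\Gamma(x)}$ is the regular normal nilpotent subgroup of the semiprimitive group $G_x^{\Gamma(x)}$. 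Its point stabiliser $G_{xy}^{\Gamma(x)}$ contains the nontrivial normal $p$-subgroup $Q_y^{\Gamma(x)}$ (Lemma~\ref{firstlem}), so the semiprimitivity criterion Theorem~\ref{thm:spcriterion}, applied inside $G_x^{\Gamma(x)}$, yields $[R^{\Gamma(x)}, Q_y^{\Gamma(x)}] = R^{\Gamma(x)}$. Thus proving that $q$ divides $|R/(L_x \cap \vstx{1})|$ is the same as showing $q$ divides $|R^{\Gamma(x)}|$.

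Next I would manufacture the module on which the structure is read off. Since $\ba{Q_y}$ cannot centralise $\ba{Q_x}$ — otherwise $\ba{Q_y} \leqslant \cent{\ba{L_x}}{\ba{Q_x}} \leqslant \ba{Q_x}$ and then $Q_y^{\Gamma(x)} = 1$ against Lemma~\ref{firstlem} — there is an irreducible $\mathbb{F}_p\ba{L_x}$-chief factor $V$ inside $\ba{Q_x}$ with $[V,\ba{Q_y}] \neq 1$. Using $Z_{xy} \leqslant \cent{Q_xQ_y}{Q_y}$ together with the commutator bound $[\vstx{1},Q_y] \leqslant \vstx{1} \cap Q_y \leqslant Q_x$ from Lemma~\ref{lem:lx/qx centre}, I would show that $\ba{Q_y}$ acts quadratically on $V$, so that $V$ is a nontrivial $\mathbb{F}_p$-module for $\langle \ba{Q_y}^{\ba{L_x}}\rangle$ carrying a quadratic offender.

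The hard part will be to deduce $p \in \{2,3\}$, that is, to exclude $p \geqslant 5$. Here I would feed the quadratic action of $\ba{Q_y}$ on $V$ into the available quadratic-pair/failure-of-factorisation analysis to see that the relevant normal closure acts on a two-dimensional summand of $V$ as $\mathrm{SL}_2(p)$, and then invoke the distinctive extra hypothesis of this paper: the regular normal nilpotent subgroup $R^{\Gamma(x)}$ of the local group is nilpotent and of order prime to $p$. The $p'$-part of an $\mathrm{SL}_2(p)$-section is governed by the coprime action of $\ba{Q_y}$ on $R^{\Gamma(x)}$, and for $p \geqslant 5$ the non-nilpotent $p'$-structure of $\mathrm{SL}_2(p)$ cannot be accommodated by a nilpotent $R^{\Gamma(x)}$ subject to $[R^{\Gamma(x)},Q_y^{\Gamma(x)}] = R^{\Gamma(x)}$. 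Carrying out this incompatibility — genuinely controlling the coprime action and ruling out large $p$ — is where essentially all the work lies.

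Finally, with $p \in \{2,3\}$ established, I would read off $q = 5-p$ from the same picture. For $p = 2$ the section is $\mathrm{SL}_2(2) \cong \sym{3}$, whose only odd prime is $3$; for $p = 3$ it is $\mathrm{SL}_2(3)$, whose $3'$-part is a nontrivial $2$-group. In each case the resulting $q$-elements act nontrivially on $R^{\Gamma(x)}$, and applying $[R^{\Gamma(x)},Q_y^{\Gamma(x)}] = R^{\Gamma(x)}$ to each Sylow component of the nilpotent group $R^{\Gamma(x)}$ forces $q = 5-p$ to divide $|R^{\Gamma(x)}| = |R/(L_x \cap \vstx{1})|$, completing the proof.
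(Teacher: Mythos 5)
Your setup is fine (the identification $R/(L_x\cap\vstx{1})\cong R^{\Gamma(x)}$, the facts $\op{p'}{\ba{L_x}}=1$ and $\cent{\ba{L_x}}{\ba{Q_x}}\leqslant\ba{Q_x}$, and the existence of a chief factor $V$ inside $\ba{Q_x}$ with $[V,\ba{Q_y}]\neq 1$), but the engine of your proof is missing, in two places. First, the quadratic action of $\ba{Q_y}$ on $V$ is never established, and the ingredients you cite do not give it: from $[Q_x,Q_y]\leqslant Q_x\cap Q_y=\vst{xy}{1}$ one only gets that $[V,\ba{Q_y},\ba{Q_y}]$ lies in the image of $[\vst{xy}{1},Q_y]$, which is contained in $\vst{xy}{1}$ but has no reason to vanish modulo the lower term of the chief factor; and the natural quadratic candidate fails in the opposite direction, since $\vst{xy}{1}=Q_x\cap Q_y$ actually centralises $Z_x$ by Lemma~\ref{qx a sylow of clxzx}. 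Second, the step you yourself flag as ``where essentially all the work lies'' is exactly the content of the lemma: to invoke a failure-of-factorisation theorem such as \cite[9.3.8]{kurzweilstellmacher} one must first verify its hypothesis, namely that $\ba{L_x}$ is not Thompson factorizable, and in the paper this is Lemma~\ref{good for glauberman}, proved by a Thompson-subgroup/Sylow argument ($Q_x\in\syl{p}{\cent{L_x}{Z_x}}$ forcing $\mathbf J(Q_xQ_y)=\mathbf J(Q_x)$, a contradiction), not by any quadratic action of $\ba{Q_y}$. There is also a conceptual confusion in your $p\geqslant 5$ case: $\ba{L_x}$ is $p$-separable, so it has no $\mathrm{SL}_2(p)$ sections at all for $p\geqslant 5$ ($\mathrm{SL}_2(p)$ is insoluble); the correct use of a (hypothetical) quadratic, noncentralising action would be $p$-stability of $p$-separable groups \cite[9.4.5]{kurzweilstellmacher}, which forces $[V,\ba{Q_y}]=1$ directly — there is no intermediate stage where an $\mathrm{SL}_2(p)$ appears and is then killed by nilpotency of $R^{\Gamma(x)}$.

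For comparison, the paper's own proof avoids modules entirely: in each forbidden case ($p\geqslant 5$; $p=3$ with $\ba{L_x}$ of odd order; $p=2$ with $|\ba{L_x}|$ coprime to $3$, the last two via Lemma~\ref{properties of lx}(iv)) the group $\ba{L_x}$ is $p$-stable or $\sym{4}$-free, so a characteristic-subgroup theorem (Glauberman--Thompson \cite[9.4.4(b)]{kurzweilstellmacher}, or Stellmacher \cite{stells4free} for $p=2$) yields a nontrivial characteristic subgroup $\ba{D}$ of $\ba{Q_xQ_y}$ normal in $\ba{L_x}$; Lemma~\ref{u normal in balx} pushes $\ba{D}$ into $\ba{Q_x}$, it lifts to $1\neq D\leqslant Q_x$ normal in $L_x$ and characteristic in $Q_xQ_y$, and then connectivity (Lemma~\ref{lem:fundlem}) gives the contradiction. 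Your overall strategy — deducing this lemma from the $\mathrm{SL}_2(p)$ structure of Theorem~\ref{mainthm2} — is legitimate and is precisely the alternative route the authors mention in the introduction (the final parity step can be done cleanly with Lemma~\ref{properties of lx}(iv): if $q\nmid|R^{\Gamma(x)}|$ then $|L_x|$ is coprime to $q$, incompatible with an $\mathrm{SL}_2(p)$ section). But since you supply neither the quadratic action nor the failure of Thompson factorization, the proposal as written does not prove the statement.
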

\begin{proof}
Let $\ba{L_x} = L_x / R_0$ and note that by \cite[6.4.3]{kurzweilstellmacher} $\ba{L_x}$ has characteristic
$p$. If  $p\geq 5$ then $\ba{L_x}$ is $p$-stable by \cite[9.4.5 (1)]{kurzweilstellmacher}. If $p=3$ and $q
\nmid |R/L_x \cap \vstx{1}|$ then $\ba{L_x}$ has odd order by Lemma~\ref{properties of lx} (iv), and is
therefore $p$-stable by \cite[9.4.5
(2)]{kurzweilstellmacher}. If $p=2$ and $q \nmid |R / L_x \cap \vstx{1}|$ then $\ba{L_x}$ has order coprime
to three by Lemma~\ref{properties of lx} and is therefore
$\sym{4}$-free.

Suppose for a contradiction that one of the first two cases holds. Then we may apply \cite[9.4.4
(b)]{kurzweilstellmacher} to $\ba{L_x}$ to obtain a nontrivial characteristic subgroup $\ba{D}$ of
$\ba{Q_xQ_y}$ which is normal in $\ba{L_x}$. Lemma~\ref{u normal in balx} gives $\ba{D} \leqslant \ba{Q_x}$.
Since
the preimage of $\ba{Q_x}$ is $Q_x R_0 \cong Q_x \times R_0$ we may choose a subgroup $D$ of $Q_x$ such that
$D$ has image $\ba{D}$. Since $\ba{D}$ is normal in $\ba{L_x}$ we see that $DR_0$ is normal in $L_x$ and $D
\in \syl{p}{DR_0}$. The Frattini argument shows that $L_x = DR_0 \norm{L_x}{D}$. Since $[R_0,D] \leqslant
[R_0,Q_x] = 1$ we have that $D$ is normal in $L_x$. Now $\ba{Q_xQ_y}$ is isomorphic to $Q_xQ_y$, so $D$ is
characteristic in $Q_xQ_y$. But now $1 \neq D$ is normalised by $\langle L_x, G_{\{x,y\}} \rangle$, a
contradiction.

Suppose now that the third case holds. Since $\ba{L_x}$ is now $\sym{4}$-free, we may use \cite{stells4free}
in place of \cite[9.4.4 (b)]{kurzweilstellmacher} and a similar argument to above leads to a contradiction. 
\end{proof}

\begin{proof}[Proof of Theorem~\ref{mainthm1}]
Let $\Gamma$ and $G$ be as in the hypothesis of Theorem~\ref{mainthm1}. Since the index of $\vst{xy}{1}$ in $G_x$ is at most $d!(d-1)!$ we assume for a contradiction that $\vst{xy}{1}\neq 1$.
In particular, we may apply all of the results in this section. Let $N^{\Gamma(x)}$ be the regular normal nilpotent
subgroup
of $(G_x)^{\Gamma(x)}$, and note that $d=|N^{\Gamma(x)}|$ is coprime to six. Using the notation of
Lemma~\ref{lem: p
eq 2,3 and q} we have  
$$N^{\Gamma(x)} \cong R/ (L_x \cap \vstx{1}).$$
 Lemma~\ref{lem: p eq 2,3 and q} shows that either $2 \mid | N^{\Gamma(x)}|$ or $3 \mid | N^{\Gamma(x)}|$, a
contradiction.
\end{proof}

In the next proposition we use the Thompson subgroup. For a $p$-group $S$ we let
$\mathcal A(S)$ be the set of elementary abelian subgroups of maximal order in $S$. Then the \emph{Thompson
subgroup of} $S$ is
$$\mathbf J(S) = \langle A \mid A \in \mathcal A(S) \rangle.$$
For a group $F$ and a prime $p$ we set 
$$\mathbf J(F) = \langle \mathbf J(S) \mid S \in \syl{p}{F} \rangle.$$

\begin{prop}
\label{jlx bar}
With $\ba{L_x} = L_x / R_0$ we have $\mathbf J(\ba{L_x}) = \ba{\mathbf J(L_x)}$.
\end{prop}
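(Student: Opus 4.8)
The plan is to exploit that $R_0 = \op{p'}{L_x}$ is a normal $p'$-subgroup of $L_x$, so that the bar map $L_x \to \ba{L_x}$ induces an isomorphism on each Sylow $p$-subgroup and a bijection on the set of Sylow $p$-subgroups. Since the Thompson subgroup is assembled from the elementary abelian subgroups of maximal order inside Sylow $p$-subgroups, both this local ingredient and the overall generating set are preserved under the quotient, and the equality $\mathbf J(\ba{L_x}) = \ba{\mathbf J(L_x)}$ will follow formally.

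First I would record the Sylow correspondence. Because $R_0$ is a $p'$-group, for any $S \in \syl{p}{L_x}$ we have $S \cap R_0 = 1$, so the restriction of the natural map to $S$ is an isomorphism $S \to \ba{S}$, and $\ba{S} = SR_0/R_0 \in \syl{p}{\ba{L_x}}$. Conversely, by the standard correspondence of Sylow subgroups under a surjection (here with $p'$-kernel), every Sylow $p$-subgroup of $\ba{L_x}$ arises in this way; thus $S \mapsto \ba{S}$ is a bijection from $\syl{p}{L_x}$ onto $\syl{p}{\ba{L_x}}$. The crucial point to invoke explicitly is $S \cap R_0 = 1$, which guarantees the map is injective on $S$ and hence order-preserving.

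Next I would use that an order-preserving isomorphism of $p$-groups carries elementary abelian subgroups of maximal order to elementary abelian subgroups of maximal order. Applying this to $S \to \ba{S}$ gives $\mathcal A(\ba{S}) = \{\ba{A} \mid A \in \mathcal A(S)\}$, whence
$$\mathbf J(\ba{S}) = \langle \ba{A} \mid A \in \mathcal A(S)\rangle = \ba{\langle A \mid A \in \mathcal A(S)\rangle} = \ba{\mathbf J(S)}.$$
Taking the join over all Sylow $p$-subgroups and using that the bar map commutes with forming joins of subgroups, together with the bijection established above, I conclude
$$\mathbf J(\ba{L_x}) = \langle \mathbf J(\ba{S}) \mid \ba{S} \in \syl{p}{\ba{L_x}}\rangle = \langle \ba{\mathbf J(S)} \mid S \in \syl{p}{L_x}\rangle = \ba{\mathbf J(L_x)}.$$

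I do not expect a genuine obstacle here: the proposition is essentially the statement that the Thompson subgroup is compatible with quotients by normal $p'$-subgroups, and every step is forced once the Sylow correspondence is in hand. The only place deserving mild care is checking that the bijection of Sylow subgroups matches the sets $\mathcal A(S)$ and $\mathcal A(\ba S)$ correctly, which is precisely where the isomorphism $S \cong \ba{S}$ (and hence $S \cap R_0 = 1$) is needed.
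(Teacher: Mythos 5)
Your proposal is correct and follows essentially the same route as the paper: use that $R_0=\op{p'}{L_x}$ has order coprime to $p$ so that each Sylow $p$-subgroup $S$ of $L_x$ maps isomorphically onto a Sylow $p$-subgroup $\ba{S}$ of $\ba{L_x}$ with $\mathbf J(\ba{S})=\ba{\mathbf J(S)}$, then take the join over all Sylow subgroups. One minor inaccuracy, which does not affect the argument: the map $S \mapsto \ba{S}$ is surjective onto $\syl{p}{\ba{L_x}}$ but need not be injective (for example all three Sylow $2$-subgroups of $\sym{3}$ have the same image modulo $\op{2'}{\sym{3}}$), and surjectivity is all that your final display actually uses.
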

\begin{proof}
Let $S$ be a Sylow $p$-subgroup of $L_x$. Since $R_0$ has order coprime to $p$ we see that $\ba{S} \in
\syl{p}{\ba{L_x}}$ and $\mathbf J(\ba{S}) = \ba{ \mathbf J(S)}$. Hence
$$\mathbf J(\ba{L_x}) = \langle \mathbf J(\ba{S}) \mid \ba{S} \in \syl{p}{\ba{L_x}} \rangle= \langle
\ba{\mathbf J(S)} \mid S \in \syl{p}{L_x} \rangle = \ba{ \mathbf J(L_x)}.$$
\end{proof}

Following \cite[§9.2]{kurzweilstellmacher} we say that a group $F$ is Thompson factorizable with respect to
the prime $p$ if $p$ divides $|F|$ and for some Sylow $p$-subgroup $S$ of $F$ we have 
$$F=\op{p'}{F}\cent{F}{\Omega\zent{S}}\norm{F}{\mathbf J(S)}.$$
We use this notion in the next lemma.

\begin{lem}
\label{good for glauberman}
Let $\ba{L_x}=L_x / R_0$. Then $\op{p'}{\ba{L_x}} = 1$ and $\ba{L_x}$ is not Thompson factorizable with
respect to $p$.
\end{lem}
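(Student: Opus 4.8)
The first assertion is immediate and needs no contradiction: since $R_0 = \op{p'}{L_x}$ is by definition the largest normal $p'$-subgroup of $L_x$, the full preimage in $L_x$ of any normal $p'$-subgroup of $\ba{L_x}$ is a normal $p'$-subgroup of $L_x$ containing $R_0$, hence equal to $R_0$. Therefore $\op{p'}{\ba{L_x}}=1$.

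For the second assertion I would argue by contradiction, reworking the endgame of Lemma~\ref{lem: p eq 2,3 and q}. So suppose $\ba{L_x}$ is Thompson factorizable with respect to $p$. Fix the Sylow $p$-subgroup $\ba S = \ba{Q_xQ_y}$ of $\ba{L_x}$, which is legitimate by Lemma~\ref{properties of lx}(ii). Since $R_0$ has order coprime to $p$ we have $\ba S \cong Q_xQ_y$, so $\Omega\zent{\ba S} = \ba{Z_{xy}}$ and, exactly as in the proof of Proposition~\ref{jlx bar}, $\mathbf J(\ba S) = \ba{\mathbf J(Q_xQ_y)}$. Using $\op{p'}{\ba{L_x}}=1$ from the first part, Thompson factorizability becomes
$$\ba{L_x} = \cent{\ba{L_x}}{\ba{Z_{xy}}}\,\norm{\ba{L_x}}{\mathbf J(\ba S)}.$$

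The plan is to extract from this factorization a nontrivial characteristic subgroup of $Q_xQ_y$ that is normal in $L_x$, and then contradict Lemma~\ref{lem:fundlem}. Recall that $\ba{L_x}$ has characteristic $p$ by \cite[6.4.3]{kurzweilstellmacher}, as already used in Lemma~\ref{lem: p eq 2,3 and q}, and now $\op{p'}{\ba{L_x}}=1$. I would invoke the relevant factorization result of Glauberman for such a group: being Thompson factorizable, $\ba{L_x}$ has a nontrivial characteristic subgroup $\ba D$ of $\ba S$ (for instance $\mathbf J(\ba S)$) that is normal in $\ba{L_x}$. Since $\ba D$ is then a normal $p$-subgroup, Lemma~\ref{u normal in balx} gives $\ba D \leqslant \op{p}{\ba{L_x}} = \ba{Q_x}$. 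As the preimage of $\ba{Q_x}$ is $Q_xR_0 \cong Q_x \times R_0$ and $\ba D$ corresponds to a characteristic subgroup $D$ of $Q_xQ_y$ with $Q_xQ_y \cap R_0 = 1$, the $p$-group $D$ lies in $Q_x$ and satisfies $\ba D = \ba{D}$; moreover $D \neq 1$ because $Q_xQ_y \neq 1$, as $1 \neq \vst{xy}{1} \leqslant Q_x \cap Q_y$. Now $DR_0$ is normal in $L_x$ with $D \in \syl{p}{DR_0}$, so the Frattini argument yields $L_x = DR_0\,\norm{L_x}{D}$, and since $[R_0,D] \leqslant [R_0,Q_x] = 1$ we conclude $D \trianglelefteq L_x$.

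Finally, $G_{\{x,y\}}$ normalizes $Q_xQ_y$, since an element exchanging $x$ and $y$ interchanges $Q_x$ and $Q_y$, which normalize one another; hence $G_{\{x,y\}}$ normalizes the characteristic subgroup $D$. Thus $1 \neq D \leqslant G_x \cap G_y$ is normalized by $\langle L_x, G_{\{x,y\}} \rangle$; as $L_x^{\Gamma(x)}$ is transitive by Lemma~\ref{firstlem}(ii) and $G_{\{x,y\}} \not\leqslant G_x \cap G_y$, Lemma~\ref{lem:fundlem}(b) forces $D = 1$, a contradiction. The main obstacle is the step invoking Glauberman's factorization result: one must show that Thompson factorizability of the characteristic-$p$ group $\ba{L_x}$ with $\op{p'}{\ba{L_x}}=1$ genuinely produces a nontrivial characteristic subgroup of $\ba S$ that is normal in $\ba{L_x}$. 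This is precisely the input that plays here the role played by $p$-stability in Lemma~\ref{lem: p eq 2,3 and q}, and pinning down its exact form — together with checking that the subgroup it yields is both characteristic in $\ba S$ and nontrivial so that the lift to $Q_x$ goes through — is the delicate part of the argument.
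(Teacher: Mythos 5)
Your first assertion is correct (and matches the paper, where it is dismissed as trivial), and your endgame --- lifting $\ba D$ to a subgroup $D\leqslant Q_x$ via $Q_xR_0\cong Q_x\times R_0$, the Frattini argument, and the appeal to Lemma~\ref{lem:fundlem}(b) --- is sound; it is exactly the endgame of Lemma~\ref{lem: p eq 2,3 and q}. But the pivotal step, which you yourself flag as ``the delicate part'', is a genuine gap, and it cannot be closed by citation: there is no off-the-shelf Glauberman-type theorem asserting that a Thompson factorizable group of characteristic $p$ (with trivial $p'$-core) possesses a nontrivial characteristic subgroup of its Sylow $p$-subgroup that is normal in the whole group. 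The known results of that shape --- \cite[9.4.4 (b)]{kurzweilstellmacher} (Glauberman's ZJ-type theorem) and \cite{stells4free} --- require $p$-stability, respectively $\sym{4}$-freeness, and those are precisely the hypotheses that are unavailable here: Lemma~\ref{lem: p eq 2,3 and q} has already exhausted them, which is why the present lemma is needed at all. Your parenthetical suggestion that $\mathbf J(\ba S)$ itself could serve as $\ba D$ is also unwarranted: Thompson factorizability asserts only the product decomposition $\ba{L_x}=\cent{\ba{L_x}}{\Omega\zent{\ba S}}\,\norm{\ba{L_x}}{\mathbf J(\ba S)}$, which does not make $\mathbf J(\ba S)$ normal (if it did, the factorization would be vacuous). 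So as written, your argument assumes a statement essentially as strong as the lemma it is meant to prove.

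The paper closes this hole by a self-contained route that replaces the black box with structure already established for $\cent{L_x}{Z_x}$. From factorizability, \cite[9.2.12]{kurzweilstellmacher} gives $\mathbf J(\ba{L_x})\leqslant\cent{\ba{L_x}}{\ba V}$, where $\ba V=\langle\Omega\zent{\ba S}\mid\ba S\in\syl{p}{\ba{L_x}}\rangle$; one then identifies $\ba V=\ba{Z_x}$ (Lemmas~\ref{firstlem} and \ref{properties of lx}(ii), plus coprimality of $|R_0|$ to $p$), and lifts via Proposition~\ref{jlx bar} to obtain $R_0\mathbf J(Q_xQ_y)\leqslant\cent{L_x}{Z_x}$. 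Now Lemma~\ref{qx a sylow of clxzx} --- $Q_x$ is \emph{the} Sylow $p$-subgroup of $\cent{L_x}{Z_x}$ --- forces $\mathbf J(Q_xQ_y)\leqslant Q_x$, hence $\mathbf J(Q_xQ_y)=\mathbf J(Q_x)$. This nontrivial group is characteristic in both $Q_x\trianglelefteq G_x$ and in $Q_xQ_y$, so it is normalised by $\langle G_x,G_{\{x,y\}}\rangle$, contradicting Lemma~\ref{lem:fundlem}. Note that this argument does in the end produce exactly the object you wanted --- a nontrivial characteristic subgroup of $Q_xQ_y$ normal in $L_x$ --- so your instinct was realizable; but it is extracted from the specific Sylow structure of $\cent{L_x}{Z_x}$ in this graph-theoretic setting, not from any general property of Thompson factorizable groups, and supplying that extraction is precisely the content of the lemma.
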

\begin{proof}
Since $R_0= \op{p'}{L_x}$ the first part of the claim holds trivially. Suppose the latter part of the claim
is false and set 
$$\ba{V} = \langle \Omega\zent{\ba S} \mid \ba S \in \syl{p}{\ba {L_x}} \rangle.$$
By our assumption that $\ba{L_x}$ is Thompson factorizable, \cite[9.2.12]{kurzweilstellmacher} implies
 $$\mathbf J(\ba{L_x}) \leqslant \cent{\ba{L_x}}{\ba{V}}.$$
Using Lemmas \ref{firstlem} and \ref{properties of lx} (ii) and the fact that $|R_0|$ is coprime to $p$ we
have  
$$\ba{V} = \langle \Omega\zent{\ba{Q_xQ_y}}^{\ba{L_x}} \rangle= \ba{\langle \Omega \zent{ Q_xQ_y}^{L_x}
\rangle} = \ba{Z_x}.$$
By Proposition~\ref{jlx bar} we have $\mathbf J(\ba{L_x}) = \ba{\mathbf J(L_x)}$ and so $\mathbf
J(\ba{Q_xQ_y})=\ba{\mathbf J(Q_xQ_y)} \leqslant \cent{\ba{L_x}}{\ba{Z_x}}$. Again using that $|R_0|$ is
coprime to $p$, we have that $\cent{\ba{L_x}}{\ba{Z_x}} = \ba{\cent{L_x}{Z_x}}$. Hence $\ba{\mathbf
J(Q_xQ_y)} \leqslant \ba{\cent{L_x}{Z_x}}$ implies that 
 $$\mathrm R_0 \mathbf J(Q_xQ_y) \leqslant R_0\cent{L_x}{Z_x} = \cent{L_x}{Z_x}.$$
By Lemma \ref{qx a sylow of clxzx}, $Q_x$ is a Sylow $p$-subgroup of $ \cent{L_x}{Z_x}$, whence $\mathbf
J(Q_xQ_y) \leqslant Q_x$. We obtain $\mathbf J(Q_xQ_y) = \mathbf J(Q_x)$,  a contradiction.
\end{proof}

Let $J_x = \mathbf J(L_x) \cent{L_x}{Z_x}$. Note that $Q_xR_0 \leqslant \cent{L_x}{Z_x} \leqslant J_x $.

\begin{prop}
\label{jx trans}
The group $J_x^{\Gamma(x)}$ is transitive.
\end{prop}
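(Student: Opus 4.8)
The plan is to prove that $J_x$ is normal in $G_x$, so that $J_x^{\Gamma(x)}$ is a normal subgroup of the semiprimitive group $G_x^{\Gamma(x)}$ and is therefore transitive or semiregular, and then to rule out the semiregular case. First, $\mathbf J(L_x)$ is characteristic in $L_x$, being generated by the Thompson subgroups of the Sylow $p$-subgroups of $L_x$, which are permuted by any automorphism of $L_x$; since $L_x \vartriangleleft G_x$ this gives $\mathbf J(L_x) \vartriangleleft G_x$. Moreover $Z_x = \grp{Z_{xy}^{G_x}}$ is $G_x$-invariant by construction, and $L_x \vartriangleleft G_x$, so $\cent{L_x}{Z_x}$ is normal in $G_x$ as well. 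Hence $J_x = \mathbf J(L_x)\cent{L_x}{Z_x}$ is a product of two normal subgroups of $G_x$ and is normal in $G_x$. Consequently $J_x^{\Gamma(x)}$ is normal in $G_x^{\Gamma(x)}$, and by semiprimitivity it is transitive or semiregular; it suffices to exclude the latter possibility.

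So I would suppose, for a contradiction, that $J_x^{\Gamma(x)}$ is semiregular. The group $R^{\Gamma(x)}$ is the regular normal nilpotent, hence soluble, subgroup of $G_x^{\Gamma(x)}$, so Theorem~\ref{thm:existence of kernels} forces $J_x^{\Gamma(x)} \leqslant R^{\Gamma(x)}$. I then pass back to the abstract group: the kernel of the restriction map $L_x \to L_x^{\Gamma(x)}$ is $L_x \cap \vstx{1}$, which is contained in $R$, so the full preimage of $R^{\Gamma(x)}$ inside $L_x$ is exactly $R(L_x \cap \vstx{1}) = R$. Since $J_x \leqslant L_x$ and $J_x^{\Gamma(x)} \leqslant R^{\Gamma(x)}$, this gives $J_x \leqslant R$.

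With $J_x \leqslant R$ the contradiction comes from the Thompson subgroup. By Lemma~\ref{properties of lx}(ii), $Q_xQ_y$ is a Sylow $p$-subgroup of $L_x$, so $\mathbf J(Q_xQ_y) \leqslant \mathbf J(L_x) \leqslant J_x \leqslant R$. As $\mathbf J(Q_xQ_y)$ is a $p$-group and, by Lemma~\ref{lx n gx1 : qx coprime to p}, $Q_x$ is \emph{the} Sylow $p$-subgroup of $R$ (hence contains every $p$-subgroup of $R$), we obtain $\mathbf J(Q_xQ_y) \leqslant Q_x$. This is precisely the relation shown to be impossible in the proof of Lemma~\ref{good for glauberman}: it forces $\mathbf J(Q_xQ_y) = \mathbf J(Q_x)$, a nontrivial subgroup that is characteristic in $Q_xQ_y$ and normal in $G_x$, and so is normalised by $\grp{L_x, G_{\{x,y\}}}$, contradicting the faithfulness of $G$. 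Hence $J_x^{\Gamma(x)}$ is not semiregular and must be transitive.

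The step I expect to be the main obstacle is the transition from the intransitive induced action back to the abstract group, namely deducing $J_x \leqslant R$ from $J_x^{\Gamma(x)} \leqslant R^{\Gamma(x)}$; this hinges on the previously established inclusion $L_x \cap \vstx{1} \leqslant R$, which makes the preimage of $R^{\Gamma(x)}$ in $L_x$ equal to $R$ rather than something larger. Once $J_x \leqslant R$ is secured, everything else is a direct consequence of the Sylow structure of $R$ together with the non-Thompson-factorizability recorded in Lemma~\ref{good for glauberman}.
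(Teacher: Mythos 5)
Your argument is correct, and it shares the paper's skeleton: you show $J_x$ is normal in $G_x$, invoke semiprimitivity of $G_x^{\Gamma(x)}$ to reduce to the semiregular case, and kill that case with the Thompson-subgroup contradiction $\mathbf J(Q_xQ_y) = \mathbf J(Q_x)$. Where you genuinely differ is the bridge from semiregularity to $\mathbf J(Q_xQ_y) \leqslant Q_x$. The paper does this in one line via the unlabelled proposition at the start of Section 3: semiregularity of $J_x^{\Gamma(x)}$ gives $\mathbf J(Q_xQ_y) \leqslant J_x \cap G_{xy} \leqslant \vstx{1}$, and then $\mathbf J(Q_xQ_y) \leqslant Q_xQ_y \cap \vstx{1} = Q_x$, since $Q_y \cap \vstx{1}$ is a normal $p$-subgroup of $\vstx{1}$ and hence lies in $Q_x = \op{p}{\vstx{1}}$ by Lemma~\ref{lem:th-w}. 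You instead route through the regular normal subgroup: Theorem~\ref{thm:existence of kernels} forces $J_x^{\Gamma(x)} \leqslant R^{\Gamma(x)}$, the pullback $J_x \leqslant R$ is legitimate because $R$ is by definition the full preimage of $R^{\Gamma(x)}$ in $L_x$, and then Lemma~\ref{lx n gx1 : qx coprime to p} together with the normality of $Q_x$ in $R$ traps the $p$-group $\mathbf J(Q_xQ_y)$ inside $Q_x$. Both bridges are sound and of comparable length; the paper's stays entirely inside the arc stabiliser and needs only the local Section 3 facts, while yours leans on the Section 2 machinery for semiprimitive groups and has the mild virtue of making explicit where the solubility (nilpotency) of the regular normal subgroup enters. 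Your closing contradiction is also the one the paper intends: $\mathbf J(Q_xQ_y) = \mathbf J(Q_x)$ is nontrivial (as $1 \neq \vst{xy}{1} \leqslant Q_x$), characteristic in $Q_xQ_y$ and normal in $G_x$, hence normalised by the edge-transitive group $\grp{L_x, G_{\{x,y\}}}$, so it fixes every vertex of the connected graph $\Gamma$, contradicting faithfulness.
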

\begin{proof}
Clearly $J_x$ is  normal in $G_x$, so if the result is false then $J_x$ is semiregular on $\Gamma(x)$. Then
$\mathbf J(Q_xQ_y) \leqslant J_x \cap G_{xy} \leqslant \vstx{1}$ and from this it follows that $\mathbf
J(Q_xQ_y) = \mathbf J(Q_x)$, a contradiction.
\end{proof}

We can now prove Theorem~\ref{mainthm2} which we restate for convenience and make the substitution $Z_x =
V$. 

\smallskip

\noindent\textbf{Theorem~\ref{mainthm2}.}\emph{
Let  $H= J_x / \cent{ L_x }{ Z_x }$. Then the following hold:
\begin{itemize}
\item [(a)] $p\in \{2,3\}$.
\item [(b)] $H = E_1 \times \cdots \times E_r$ and 
$$Z_x = \cent{Z_x}{H} \times [Z_x,E_1] \times \cdots \times [Z_x,E_r].$$
In particular, $E_i$ acts faithfully on $[Z_x,E_i]$ and trivially on $[Z_x,E_j]$ for $j\neq i$.
\item[(c)] $|[Z_x,E_i]| = p^2$ and $E_i \cong \mathrm{SL}_2(p)$ for $i=1,\dots,r$.
\item[(d)] $A = \times_{i=1}^r (A \cap E_i)$ and $|A||\cent{Z_x}{A}| = |Z_x|$ for all $A \in \mathcal
A_{Z_x}(H)$.
\end{itemize}
}
\begin{proof}
By Lemma \ref{good for glauberman} we may apply \cite[9.3.8]{kurzweilstellmacher} to $\ba{L_x} = L_x / R_0$
which
yields statements (a)-(d) for $\mathbf J (\ba{L_x})\cent{\ba{L_x}}{\ba{Z_x}}/\cent{\ba{L_x}}{Z_x}$. By
Proposition~\ref{jlx bar} we have $\mathbf J(\ba{L_x}) = \ba{\mathbf J(L_x)}$ and since $|R_0|$ is coprime
to $p$ we have $\cent{\ba{L_x}}{\ba{Z_x}} = \ba{\cent{L_x}{Z_x}}$. Furthermore, since $[R_0,Z_x] \leqslant
[R_0,Q_x] = 1$ we see $R_0 \leqslant \cent{L_x}{Z_x}$. Hence 
$$\mathbf J(\ba{L_x})\cent{\ba{L_x}}{\ba{Z_x}}/\cent{\ba{L_x}}{\ba{Z_x}} = \ba{\mathbf J(L_x)
\cent{L_x}{Z_x}}/\ba{\cent{L_x}{Z_x}} \cong \mathbf J(L_x) \cent{L_x}{Z_x} / \cent{L_x}{Z_x}$$
and so the statement above holds.
\end{proof}

\begin{proof}[Proof of Corollary~\ref{maincor}]
Theorem~\ref{mainthm2} gives information about $J_x / \cent{L_x}{Z_x}$. We now convert this into information
about $J_x / (J_x \cap \vstx{1}) \cong J_x^{\Gamma(x)}$. 
 Write $J_x^{[1]}:=J_x \cap \vstx{1}$, $M_x := \cent{L_x}{Z_x}$ and note that $J_x / J_x^{[1]}$ contains the
normal subgroup $M_x J_x^{[1]} / J_x^{[1]}$.  Since $J_x^{\Gamma(x)}$ is transitive by Proposition \ref{jx
trans}, we may choose a subgroup $R$ of $J_x$ so that $J_x^{[1]} \leqslant R \leqslant J_x$ and
$R^{\Gamma(x)}$ is the nilpotent regular normal subgroup of $G_x$. Since $M_x^{\Gamma(x)}$ is intransitive,
we have $M_x \leqslant M_x J_x^{[1]} \leqslant R$.
 The quotient $J_x/M_xJ_x^{[1]}$ is visible as a quotient of 
 $$J_x / M_x \cong E_1 \times \dots \times E_r$$ where each $E_i$ is isomorphic to $\mathrm{SL}_2(p)$ with
$p=2$ or $p=3$.
 Lemma~\ref{lem:lx/qx centre} yields $[J_x,J_x^{[1]}] \leqslant [L_x, L_x\cap \vstx{1}] \leqslant Q_x$, so
we obtain 
$$[J_x, M_x J_x^{[1]}] \leqslant [J_x,M_x] [J_x,J_x^{[1]}] \leqslant M_x Q_x \leqslant M_x,$$
whence $M_x J_x^{[1]}/M_x \leqslant \zent{J_x/ M_x}$. Choose $F \leqslant J_x$ with $M \leqslant F$
so that $F/M_x = \zent{J_x/M_x}$. Then 
$$J_x^{[1]} \leqslant M_x J_x^{[1]} \leqslant F \leqslant J_x,$$
$F$ is normal in $G_x$ and $J_x/ F$ is isomorphic to a direct product of groups isomorphic to
$\mathrm{PSL}_2(p)$ where  $p=2$ or $p=3$. This gives the corollary.
\end{proof}

\end{document}